\numberwithin{equation}{section}
\newtheorem{theorem}{Theorem}[section]
\newtheorem{proposition}[theorem]{Proposition}
\newtheorem{lemma}[theorem]{Lemma}
\theoremstyle{definition}
\newtheorem{definition}[theorem]{Definition}
\newtheorem{remark}[theorem]{Remark}
\begin{document}

\baselineskip=15pt

\title[Canonical subsheaves of semistable sheaves]{Canonical subsheaves
of torsionfree semistable sheaves}

\author[I. Biswas]{Indranil Biswas}

\address{School of Mathematics, Tata Institute of Fundamental
Research, Homi Bhabha Road, Mumbai 400005, India}

\email{indranil@math.tifr.res.in}

\author[A. J. Parameswaran]{A. J. Parameswaran}

\address{School of Mathematics, Tata Institute of Fundamental
Research, Homi Bhabha Road, Mumbai 400005, India}

\email{param@math.tifr.res.in}

\subjclass[2010]{14J60, 13D07, 14F06}

\keywords{Pseudo-stable sheaf, semistable sheaf, filtration.}

\date{}

\begin{abstract}
Let $F$ be a torsionfree semistable coherent sheaf on a polarized normal projective variety
defined over an algebraically closed field.
We prove that $F$ has a unique maximal locally free subsheaf $V$ such that $F/V$ is torsionfree
and $V$ also admits a filtration of subbundles for which each successive quotient is a stable vector bundle
whose slope is $\mu(F)$. We also prove that $F$ has a unique maximal reflexive subsheaf $W$ such that $F/W$
is torsionfree and $W$ admits a filtration of subsheaves for which each successive quotient is a stable
reflexive sheaf whose slope is $\mu(F)$. We show that these canonical subsheaves behave well
with respect to the pullback operation by \'etale Galois covering maps. Given a
separable finite surjective map $\phi\, :\, Y \, \longrightarrow\, X$ between normal projective varieties,
we give a criterion for the induced homomorphism of \'etale fundamental groups
$\phi_*\, :\, \pi^{\rm et}_{1}(Y) \, \longrightarrow\, \pi^{\rm et}_{1}(X)$ to be surjective. The
criterion in question is expressed in terms of the above mentioned unique maximal locally free
subsheaf associated to the direct image $\phi_*{\mathcal O}_Y$.
\end{abstract}

\maketitle

\tableofcontents

\section{Introduction}

Let $X$ be an irreducible normal projective variety, defined over an algebraically closed field.
Fix a very ample line bundle on $X$ to define (semi)stable sheaves. Here (semi)stability would always
refer to $\mu$-(semi)stability.

Let $E$ be a reflexive semistable sheaf on $X$. Then $E$ admits a unique maximal polystable subsheaf
$E_1\, \subset\, E$ such that $\mu(E)\,=\, \mu(E_1)$ and $E/E_1$ is torsionfree. It may be mentioned that
a similar result holds also for Gieseker semistable sheaves. However, if $E$ is just a torsionfree semistable
sheaf on $X$, then a similar subsheaf $E_1$ does not exist in general (such an example is
given in Section \ref{se3.1}).

We prove the following (see Proposition \ref{prop2} and Proposition \ref{prop3}):

\begin{proposition}\label{prop-i}
Let $F$ be a torsionfree semistable sheaf on $X$.
\begin{enumerate}
\item[(a)] Assume that $F$ contains a polystable reflexive subsheaf $F'$ with $\mu(F')\,=\, \mu(F)$. Then
there is a unique reflexive subsheaf $V\, \subset\, F$ satisfying the following three conditions:
\begin{enumerate}
\item[(1)] $V$ is polystable with $\mu(V)\,=\, \mu(F)$,

\item[(2)] $F/V$ is torsionfree, and

\item[(3)] $V$ is maximal among all reflexive subsheaves of $F$ satisfying the above two conditions.
\end{enumerate}

\item[(b)] Assume that $F$ contains a polystable locally free subsheaf $F''$ with $\mu(F'')\,=\, \mu(F)$. Then
there is a unique locally free subsheaf $W\, \subset\, F$
satisfying the following three conditions:
\begin{enumerate}
\item[(1)] $W$ is polystable with $\mu(W)\,=\, \mu(F)$,

\item[(2)] $F/W$ is torsionfree, and

\item[(3)] $W$ is maximal among all locally free subsheaves of $F$ satisfying the above two conditions.
\end{enumerate}
\end{enumerate}
\end{proposition}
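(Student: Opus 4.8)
The plan is to prove both assertions at once. First I would establish the following reflexivity lemma (elementary, granting the standard homological algebra of reflexive sheaves over a normal variety): \emph{if $E\,\subseteq\, F$ is a reflexive subsheaf with $\mu(E)=\mu(F)$, then $F/E$ is torsionfree.} Indeed, the saturation $\overline E$ of $E$ in $F$ has $\mu(\overline E)=\mu(F)$, so $\overline E/E$, and hence $\overline E^{**}/E$, is torsion of codimension $\ge 2$; dualizing $0\to E\to\overline E^{**}\to\overline E^{**}/E\to 0$ and using $\mathcal Hom(\,\cdot\,,\mathcal O_X)=\mathcal Ext^1(\,\cdot\,,\mathcal O_X)=0$ for coherent sheaves of codimension $\ge 2$ on the normal variety $X$ yields $E^{**}\cong\overline E^{**}$, whence $E=\overline E$ since $E$ is reflexive. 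Consequently a reflexive (respectively, locally free) subsheaf of $F$ that is polystable of slope $\mu(F)$ automatically has torsionfree quotient; call such a subsheaf \emph{admissible}. The hypotheses say exactly that the set $\mathcal A$ of admissible subsheaves is nonempty (the given $F'$, respectively $F''$, is admissible). Since $F$ is noetherian, $\mathcal A$ has maximal elements, so it will suffice to show that $\mathcal A$ is directed: any two admissible subsheaves lie in a common one. Then a maximal element of $\mathcal A$ contains every member of $\mathcal A$, so it is the asserted unique maximal element (the sheaf $V$, respectively $W$).

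So fix admissible $V_1,\,V_2$ and set $G\,:=\,V_1+V_2\,\subseteq\, F$. Being a quotient of the polystable sheaf $V_1\oplus V_2$ of slope $\mu(F)$, $G$ satisfies $\mu(G)\ge\mu(F)$; being a subsheaf of the semistable $F$ it satisfies $\mu(G)\le\mu(F)$; hence $\mu(G)=\mu(F)$ and $G$, a same-slope quotient of a semistable sheaf, is semistable. The surjection $V_1\oplus V_2\,\twoheadrightarrow\, G$, $(a,b)\mapsto a+b$, has kernel the antidiagonal copy of $K\,:=\,V_1\cap V_2$. Now $V_1/K\cong G/V_2\hookrightarrow F/V_2$ is torsionfree, so $K$ is saturated in $V_1$, hence reflexive, and comparing degrees $\mu(K)=\mu(F)$; symmetrically $K$ is a reflexive slope-$\mu(F)$ subsheaf of $V_2$.

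The crucial step --- which I expect carries essentially all of the difficulty, and is presumably isolated as a lemma earlier in the paper --- is that a reflexive subsheaf of a polystable reflexive sheaf, of the same slope, is a direct summand. (After restriction to a general complete intersection curve $C$ of large degree this is the elementary fact that a subbundle of a polystable bundle of equal slope splits off; one descends the splitting to $X$, using the vanishing of $H^1(X,(-)\otimes\mathcal O_X(-C))$ in the relevant range, or argues directly with socles; and a direct summand of a locally free sheaf is locally free.) Granting this, write $V_1=K\oplus K_1'$ and $V_2=K\oplus K_2'$; then the antidiagonal of $K$ in $V_1\oplus V_2$, after an automorphism of the summand $K\oplus K$, becomes a direct summand of $V_1\oplus V_2$, so its complementary quotient $G$ is isomorphic to a direct summand of $V_1\oplus V_2$. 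Hence $G$ is reflexive (respectively, locally free) and polystable (a direct summand of a finite direct sum of stable sheaves is again such), and, being reflexive of slope $\mu(F)$, it is admissible by the first paragraph. Since $G\supseteq V_1,\,V_2$, this proves directedness, and the Proposition follows. The single genuinely nontrivial ingredient is the direct-summand statement; the passage to $G$, the semistability bookkeeping, and the torsionfreeness of $F/G$ are formal, and the reduction to directedness is purely set-theoretic.
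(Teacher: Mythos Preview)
Your proof is correct and takes a genuinely different route from the paper. The paper proceeds constructively: it passes to the reflexive hull $F^{**}$, invokes the cited result of \cite{BDL} to obtain the socle $W\subset F^{**}$ (the unique maximal polystable subsheaf with torsionfree quotient), and then defines $V$ as the subsheaf of $W$ generated by all stable direct summands of $W$ that lie inside $F$ (for the locally free version, those summands that are in addition locally free, with Atiyah's Krull--Schmidt theorem ensuring this is well posed); torsionfreeness of $F/V$ is checked via $0\to W/V\to F^{**}/V\to F^{**}/W\to 0$. You instead run an abstract directedness argument: first note that the torsionfree-quotient condition is automatic for reflexive subsheaves of slope $\mu(F)$, and then show that $V_1+V_2$ is again admissible by realizing it as a direct summand of $V_1\oplus V_2$. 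Your approach is more conceptual and handles both statements uniformly; the paper's is more explicit and pins $V$ down inside the socle of $F^{**}$. Two small caveats: your claim $\mathcal{E}xt^1(T,\mathcal O_X)=0$ for $T$ of codimension $\ge 2$ can fail on a non--Cohen--Macaulay normal variety, but the desired conclusion $E=\overline E^{**}$ still holds because both sheaves are reflexive and agree outside codimension two; and your curve-restriction sketch of the direct-summand lemma is delicate (lifting the splitting is not automatic), whereas your alternative ``argue directly with socles'' is the right way---inductively peel off a stable reflexive subsheaf using that nonzero maps between stable reflexive sheaves of equal slope are isomorphisms. The paper does not isolate this lemma explicitly, relying instead on the \cite{BDL} socle statement.
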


In part (a) of Proposition \ref{prop-i}, if $F$ does not contain any polystable reflexive subsheaf
$F'$ with $\mu(F')\,=\, \mu(F)$, then we set $V\,=\,0$.
In part (b) of Proposition \ref{prop-i}, if $F$ does not contain any polystable locally free subsheaf
$F''$ with $\mu(F'')\,=\, \mu(F)$, then we set $W\,=\,0$.

A semistable sheaf $F$ is called a pseudo-stable sheaf if $F$ admits a filtration of
subsheaves
$$
0\,=\, F_0\, \subsetneq\, F_1\, \subsetneq\, F_2\, \subsetneq\, \cdots \, \subsetneq\, F_{n-1}
\, \subsetneq\, F_n\,=\, F
$$
such that $F_i/F_{i-1}$ is a stable reflexive sheaf with $\mu(F_i/F_{i-1})\,=\, \mu(F)$ for all
$1\, \leq\, i\, \leq \, n$. If $F$ and all $F_i/F_{i-1}$ are locally free, then a
pseudo-stable sheaf is called a pseudo-stable bundle.

An iterative application of Proposition \ref{prop-i} gives the following (see Theorem \ref{thm1}
and Theorem \ref{thm2}):

\begin{theorem}\label{thm-i}
Let $F$ be a torsionfree semistable sheaf on $X$.
\begin{enumerate}
\item[(a)] Assume that $F$ contains a polystable reflexive subsheaf $F'$ with $\mu(F')\,=\, \mu(F)$. Then
there is a unique pseudo-stable subsheaf $V\, \subset\, F$
satisfying the following three conditions:
\begin{enumerate}
\item[(1)] $\mu(V)\,=\, \mu(F)$,

\item[(2)] $F/V$ is torsionfree, and

\item [(3)] $V$ is maximal among all pseudo-stable subsheaves of $F$ satisfying the above two conditions.
\end{enumerate}

\item[(b)] Assume that $F$ contains a polystable locally free subsheaf $F''$ with $\mu(F'')\,=\, \mu(F)$. Then
there is a unique coherent subsheaf $W\, \subset\, F$ satisfying the following four conditions:
\begin{enumerate}
\item[(1)] $\mu(W)\,=\, \mu(F)$,

\item[(2)] $W$ is a pseudo-stable bundle,

\item[(3)] $F/W$ is torsionfree, and

\item[(4)] $W$ is maximal among all subsheaves of $F$ satisfying the above three conditions.
\end{enumerate}
In particular, $W$ is locally free.
\end{enumerate}
\end{theorem}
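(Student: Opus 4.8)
The plan is to construct $V$ (respectively, $W$) by repeatedly applying Proposition \ref{prop-i}, and to read off the maximality statement from the way the iteration terminates. The first thing to record is a preliminary fact that makes the termination condition usable: \emph{if $E$ is a torsionfree semistable sheaf and $A\,\subseteq\, E$ is a reflexive subsheaf with $\mu(A)\,=\,\mu(E)$, then $A$ is saturated in $E$.} Indeed, the saturation $A'$ of $A$ in $E$ then satisfies $\mu(A)\,\le\,\mu(A')\,\le\,\mu(E)\,=\,\mu(A)$, so $A'$ has the same rank and degree as $A$, whence $A'/A$ is a torsion sheaf supported in codimension at least two; since $X$ is normal, the inclusion $A\,\hookrightarrow\, A'$ therefore induces an isomorphism $A^{\vee\vee}\,\cong\,(A')^{\vee\vee}$, and as $A'$ is torsionfree and $A$ is reflexive this yields $A'\,\subseteq\,(A')^{\vee\vee}\,\cong\, A^{\vee\vee}\,=\,A\,\subseteq\, A'$, i.e., $A\,=\,A'$. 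Consequently, in both parts of Proposition \ref{prop-i} the requirement that the quotient be torsionfree is automatic for the subsheaves in question, so the sheaf produced there is nonzero precisely when $F$ admits \emph{some} polystable reflexive (respectively, locally free) subsheaf of slope $\mu(F)$.

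Next, one builds $V$ as follows. Put $V_0\,=\,0$; given $V_{i-1}\,\subseteq\, F$ pseudo-stable with $\mu(V_{i-1})\,=\,\mu(F)$ and $F/V_{i-1}$ torsionfree, the quotient $F/V_{i-1}$ is torsionfree and semistable of slope $\mu(F)$, so one may apply the first part of Proposition \ref{prop-i} to it; let $V_i\,\subseteq\, F$ be the preimage under $F\,\to\, F/V_{i-1}$ of the resulting maximal polystable reflexive subsheaf $\overline{V_i}$ of slope $\mu(F)$. Then $F/V_i\,=\,(F/V_{i-1})/\overline{V_i}$ is torsionfree, and concatenating the filtration of $V_{i-1}$ with a refinement of $0\,\subseteq\,\overline{V_i}$ obtained from a direct-sum decomposition of the polystable sheaf $\overline{V_i}$ into stable reflexive summands shows that $V_i$ is pseudo-stable with $\mu(V_i)\,=\,\mu(F)$. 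As long as $\overline{V_i}\,\ne\, 0$ one has $\mathrm{rank}(V_i)\,>\,\mathrm{rank}(V_{i-1})$, so for some $\ell$ we reach $\overline{V_{\ell+1}}\,=\,0$; set $V\,:=\,V_\ell$. By the preliminary fact, $F/V$ is torsionfree semistable of slope $\mu(F)$ and contains no nonzero polystable reflexive subsheaf of slope $\mu(F)$, while $V$ is pseudo-stable with $\mu(V)\,=\,\mu(F)$ and $F/V$ torsionfree (and $V\,\ne\, 0$ under the hypothesis of the theorem). The locally free sheaf $W$ is built identically from the second part of Proposition \ref{prop-i}, using additionally that an extension of a locally free sheaf by a locally free sheaf is locally free and that a direct summand of a locally free sheaf is locally free (its stalks are finitely generated projective, hence free, over the local rings of $X$); this makes each $W_i$ locally free with stable locally free successive quotients, so $W$ is a pseudo-stable bundle, $F/W$ is torsionfree, and $F/W$ contains no nonzero polystable locally free subsheaf of slope $\mu(F)$.

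For maximality and uniqueness it suffices to show that $V$ contains every pseudo-stable subsheaf $V'\,\subseteq\, F$ with $\mu(V')\,=\,\mu(F)$ and $F/V'$ torsionfree (and, in the locally free case, that $W$ contains every such pseudo-stable bundle), since $V$ itself is of this form and will then be the unique maximal element. Suppose $V'\,\not\subseteq\, V$ and fix a filtration $0\,=\,V'_0\,\subsetneq\, V'_1\,\subsetneq\,\cdots\,\subsetneq\, V'_m\,=\,V'$ with each $V'_i/V'_{i-1}$ stable reflexive of slope $\mu(F)$; let $j$ be least with $V'_j\,\not\subseteq\, V$. Since $V'_{j-1}\,\subseteq\, V$, the composite $V'_j\,\hookrightarrow\, F\,\twoheadrightarrow\, F/V$ factors through $V'_j/V'_{j-1}$ and has nonzero image $(V'_j+V)/V$. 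Thus $(V'_j+V)/V$ is a nonzero quotient of the stable sheaf $V'_j/V'_{j-1}$ of slope $\mu(F)$; a proper nonzero quotient of a stable sheaf has strictly larger slope, whereas every subsheaf of the semistable sheaf $F/V$ has slope at most $\mu(F)$, so the quotient cannot be proper, and $(V'_j+V)/V\,\cong\, V'_j/V'_{j-1}$ is a nonzero stable --- hence polystable --- reflexive subsheaf of $F/V$ of slope $\mu(F)$, contradicting the construction. Therefore $V'\,\subseteq\, V$. The case of $W$ is the same, with $V'_j/V'_{j-1}$ now stable locally free, producing a nonzero polystable locally free subsheaf of $F/W$ of slope $\mu(F)$.

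I expect the bulk of the work to be in the preliminary saturation fact; granting it, the termination condition of the iteration becomes exactly ``$F/V$ contains no polystable reflexive (respectively, locally free) subsheaf of slope $\mu(F)$'', and the maximality step is the short slope computation above --- which notably never uses that the intermediate sheaves $\overline{V_i}$ are \emph{maximal}, only that the iteration has stopped. The remaining points --- that the concatenated and refined filtrations have the asserted successive quotients, that the rank bound forces termination, and that torsionfreeness and semistability descend to the quotients $F/V_{i-1}$ --- are routine, the one place calling for care being the locally free case, where one must check that local freeness survives the extensions and the passage to stable summands.
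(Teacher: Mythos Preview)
Your proof is correct and follows the paper's overall strategy: build $V$ (respectively $W$) by iterating Proposition~\ref{prop-i} on successive quotients, then verify maximality. The two proofs differ in how maximality is established. The paper argues inductively that any competing filtration $0=W_0\subsetneq\cdots\subsetneq W_m=W'$ satisfies $W_i\subseteq V_i$ for each $i$, using at each step that $V_i/V_{i-1}$ is the \emph{maximal} polystable reflexive subsheaf of $F/V_{i-1}$. Your argument instead uses only the \emph{termination} condition (that $F/V$ contains no nonzero polystable reflexive subsheaf of slope $\mu(F)$), producing a contradiction from the first $V'_j\not\subseteq V$ via a slope computation; as you observe, this never invokes the step-by-step maximality of the $\overline{V_i}$, which is a mild conceptual gain. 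You are also more explicit than the paper about the saturation fact---that a reflexive subsheaf of a torsionfree semistable sheaf with equal slope is automatically saturated---which the paper's ``it follows immediately that $W_1\subseteq V_1$'' uses silently (one needs $F/W_1$ torsionfree to invoke the characterization of $V_1$). Both approaches are short and essentially equivalent in difficulty; yours is a bit more self-contained.
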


As before, in part (a) of Theorem \ref{thm-i}, if $F$ does not contain any polystable reflexive subsheaf
$F'$ with $\mu(F')\,=\, \mu(F)$, then we set $V\,=\,0$.
In part (b) of Theorem \ref{thm-i}, if $F$ does not contain any polystable locally free subsheaf
$F''$ with $\mu(F'')\,=\, \mu(F)$, then we set $W\,=\,0$.

In Section \ref{se5.1} we show that the canonical subsheaves in Theorem \ref{thm-i} behave well
with respect to the pullback operation by \'etale Galois covering maps; see Proposition \ref{prop4}
and Proposition \ref{prop5}.

Let $X$ and $Y$ be irreducible normal projective varieties over an algebraically closed field $k$, and let
$$
\phi\, :\, Y \, \longrightarrow\, X
$$
be a separable finite surjective map. Then $\mu_{\rm max}(\phi_*{\mathcal O}_Y)\,=\, 0$. Let
$F\, \subset\, \phi_*{\mathcal O}_Y$ be the first nonzero term of the Harder--Narasimhan filtration
of $\phi_*{\mathcal O}_Y$. Let
\begin{equation}\label{ie1}
W\, \subset\, F
\end{equation}
be the unique locally free pseudo-stable bundle given by the second part of
Theorem \ref{thm-i}. Then we have ${\mathcal O}_X\, \subset\, W$.

We prove the following (see Proposition \ref{propn2}):

\begin{proposition}\label{propni}
The homomorphism of \'etale fundamental groups induced by $\phi$
$$
\phi_*\, :\, \pi^{\rm et}_{1}(Y) \, \longrightarrow\, \pi^{\rm et}_{1}(X)
$$
is surjective if and only if $W\,=\, {\mathcal O}_X$, where $W$ is the subsheaf in \eqref{ie1}.
\end{proposition}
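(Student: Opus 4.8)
The plan is to identify the bundle $W$ explicitly: I claim $W=\eta_*\mathcal{O}_{\widetilde X}$, where $\eta\colon\widetilde X\to X$ is the largest finite \'etale covering through which $\phi$ factors. First recall the standard fact that $\phi_*$ fails to be surjective precisely when $\phi$ factors through some connected finite \'etale covering of $X$ of degree $>1$: if $Y\to X'\to X$ with $X'\to X$ connected \'etale of degree $>1$, then $\phi_*$ factors through the proper open subgroup $\pi^{\rm et}_{1}(X')\subsetneq\pi^{\rm et}_{1}(X)$; conversely, if the (closed) image of $\phi_*$ is proper, choose an open subgroup strictly between it and $\pi^{\rm et}_{1}(X)$ and take the corresponding covering. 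Since $Y$ is connected and the degree of an intermediate covering is bounded by $\deg\phi$, among all factorizations $Y\to X'\to X$ with $X'\to X$ finite \'etale there is a largest one $\eta\colon\widetilde X\to X$ (given two such, $Y$ maps into a connected component of $X'\times_X X''$, which dominates both and is again \'etale over $X$). Thus $\phi_*$ is surjective if and only if $\widetilde X=X$, i.e.\ if and only if $\eta_*\mathcal{O}_{\widetilde X}=\mathcal{O}_X$, so the Proposition is equivalent to the equality $W=\eta_*\mathcal{O}_{\widetilde X}$.

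The first half of this, namely $\eta_*\mathcal{O}_{\widetilde X}\subseteq W$, rests on the following lemma: for any connected finite \'etale $\eta\colon\widetilde X\to X$, the sheaf $\eta_*\mathcal{O}_{\widetilde X}$ is a locally free pseudo-stable bundle of slope $0$, of which $\mathcal{O}_X\subset\eta_*\mathcal{O}_{\widetilde X}$ is the first step of one defining filtration, and if $\phi=\eta\circ\psi$ then $\phi_*\mathcal{O}_Y/\eta_*\mathcal{O}_{\widetilde X}$ is torsionfree. To prove the lemma, pass to the Galois closure $p\colon\widehat X\to X$, with group $G$ and $\widetilde X=\widehat X/H$; then $\eta_*\mathcal{O}_{\widetilde X}$ is the bundle associated to the permutation representation $k[H\backslash G]$ of $G$. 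The associated-bundle functor from $\mathrm{Rep}_k(G)$ to bundles on $X$ is exact and fully faithful (using $H^0(\widehat X,\mathcal{O}_{\widehat X})=k$), every associated bundle is semistable of slope $0$ (it becomes trivial after pulling back by $p$), and the bundle associated to a simple $kG$-module is stable; equivalently, $\eta_*\mathcal{O}_{\widetilde X}$ is essentially finite and its Jordan--Holder factors in the category of semistable sheaves of slope $0$ are locally free and stable. Applying this to a composition series of $k[H\backslash G]$ whose bottom step is the trivial submodule $k$ exhibits the desired filtration, with first quotient $\mathcal{O}_X$; torsionfreeness of $\phi_*\mathcal{O}_Y/\eta_*\mathcal{O}_{\widetilde X}=\eta_*(\psi_*\mathcal{O}_Y/\mathcal{O}_{\widetilde X})$ holds because $\mathcal{O}_{\widetilde X}$ is saturated in $\psi_*\mathcal{O}_Y$ (normality of $\widetilde X$) and $\eta$ is finite. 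Since $\eta_*\mathcal{O}_{\widetilde X}$ is semistable of slope $0=\mu_{\rm max}(\phi_*\mathcal{O}_Y)=\mu(F)$ it lies in $F$, so the maximality clause in the second part of Theorem \ref{thm-i} yields $\eta_*\mathcal{O}_{\widetilde X}\subseteq W$. In particular, if $W=\mathcal{O}_X$ then $\eta_*\mathcal{O}_{\widetilde X}=\mathcal{O}_X$, so $\widetilde X=X$ and $\phi_*$ is surjective; this is the ``if'' direction.

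For the converse, assume $\phi_*$ is surjective and suppose, for contradiction, that $W\supsetneq\mathcal{O}_X$; since $W/\mathcal{O}_X$ is torsionfree ($\mathcal{O}_X$ being saturated in $\phi_*\mathcal{O}_Y$) this forces $\mathrm{rk}\,W\ge2$. Let $A\subseteq\phi_*\mathcal{O}_Y$ be the $\mathcal{O}_X$-subalgebra generated by $W$. Each iterated product $W^{\cdot n}$ is a quotient of $W^{\otimes n}$, which is semistable of slope $0$ (tensor products of semistable sheaves of slope $0$ being semistable of slope $0$), so $W^{\cdot n}$ has slope $\ge0$; being a subsheaf of $\phi_*\mathcal{O}_Y$, which has $\mu_{\rm max}=0$, it has slope $\le0$; hence $W^{\cdot n}$ is semistable of slope $0$. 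By noetherianity this increasing chain stabilizes, so $A$ is an $\mathcal{O}_X$-subalgebra, semistable of slope $0$, with $H^0(X,A)\subseteq H^0(Y,\mathcal{O}_Y)=k$. Therefore $\varpi\colon X':=\mathrm{Spec}_X A\to X$ is a finite surjective morphism from a connected integral scheme, through which $\phi$ factors, of degree $\mathrm{rk}\,A\ge\mathrm{rk}\,W\ge2$; it is generically \'etale by separability, and it is unramified in codimension one because if it ramified along a prime divisor, the discriminant — a nonzero global section of $(\det A)^{\otimes-2}$ — would have nonempty vanishing divisor, forcing $\deg\det A<0$ and contradicting $\mu(A)=0$. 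If one can upgrade ``\'etale in codimension one'' to ``\'etale'', then $\phi$ factors through a nontrivial connected finite \'etale covering of $X$, contradicting the surjectivity of $\phi_*$, and the proof is complete.

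The step I expect to be the crux is exactly this last upgrade: over a merely normal base, ``\'etale in codimension one'' does not in general imply ``\'etale'' (quotient singularities already violate purity of the branch locus), so one must use genuinely that $W$ — and hence the part of $A$ it controls — is \emph{locally free}, not merely reflexive. The mechanism is that, once the relevant sheaf is locally free, the determinant of the trace pairing becomes a section of a line bundle vanishing only in codimension $\ge2$, hence vanishing nowhere, so the trace pairing is a global isomorphism and the covering is finite flat with everywhere perfect trace form, i.e.\ finite \'etale; the real work is to show that the local freeness of $W$ propagates to the covering $X'\to X$, equivalently that a pseudo-stable sub-bundle of $\phi_*\mathcal{O}_Y$ strictly containing $\mathcal{O}_X$ genuinely produces a nontrivial finite \'etale covering. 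This is precisely where the full strength of the hypotheses on $W$ provided by Theorem \ref{thm-i} — its local freeness and the precise form of its filtration — is indispensable.
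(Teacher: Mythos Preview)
Your ``if'' direction is correct and parallels the paper's: an intermediate connected \'etale cover $\eta$ produces a locally free pseudo-stable subbundle $\eta_*\mathcal{O}_{\widetilde X}\subset F$ with torsionfree cokernel, hence $\eta_*\mathcal{O}_{\widetilde X}\subset W$ by maximality. The ``only if'' direction, however, has a genuine gap in addition to the one you already flag. Your step ``$W^{\otimes n}$ is semistable because tensor products of semistable sheaves of slope $0$ are semistable of slope $0$'' is false in positive characteristic, which the paper allows. So you do not know that the iterated products $W^{\cdot n}$ have nonnegative slope, nor that the subalgebra $A$ they generate inside $\phi_*\mathcal{O}_Y$ is semistable of slope $0$, let alone locally free; and without local freeness of $A$ the discriminant is not a section of an honest line bundle, so even the codimension-one step is shaky before one reaches the purity problem you identify.

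The device you are missing, and which the paper uses to dissolve all of these issues at once, is to pass to the Galois closure $\varphi\colon Z\to X$ of $\phi$ and prove that $\varphi^*W$ is a \emph{trivial} bundle (Lemma~\ref{lemn1}): $\varphi^*W$ is a degree-zero locally free subsheaf of $(\varphi^*\varphi_*\mathcal{O}_Z)/\mathrm{Torsion}\subset\mathcal{O}_Z\otimes_k k[\Gamma]$, so its top exterior power is a degree-zero line bundle admitting a nonzero map to a trivial bundle, hence is trivial, whence $\varphi^*W$ is a degree-zero subbundle of a trivial bundle and is therefore trivial. Triviality of $\varphi^*W$ immediately gives semistability of $W\otimes W$, and the paper then shows (Proposition~\ref{propn1}) that $\mathbf m(W\otimes W)\subset W$: thus $W$ is itself the subalgebra, already locally free, and there is no need to pass to a larger $A$. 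Finally, $\mathrm{Spec}_X W\to X$ becomes the trivial cover after base change along the finite surjection $\varphi$, hence is \'etale; no discriminant or purity argument is needed. In short, the local freeness of $W$ is exploited not through the trace form but through the embedding of $\varphi^*W$ into a trivial bundle on the Galois closure.
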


When $\dim X\,=\,1$ (equivalently, $\dim Y\,=\,1$), Proposition \ref{propni} was proved in \cite{BP}.
We note that $F$ coincides with $W$ in \eqref{ie1} when $\dim X\,=\,1$.

\section{Pseudo-stable sheaves}

Let $k$ be an algebraically closed field. Let $X$ be an irreducible normal
projective variety defined over $k$. Fix a very ample line bundle $L$ on $X$;
using it, define the degree
$$
{\rm degree}(F)\, \in\,\mathbb Z
$$
of any torsionfree coherent $F$ sheaf on $X$ \cite[p.~13--14, Definition 1.2.11]{HL}. The \textit{slope}
of $F$, which is denoted by $\mu (F)$, is defined to be
$$
\mu(F)\, :=\, \frac{{\rm degree}(F)}{{\rm rank}(F)}\, .
$$
We recall that $F$ is called
\textit{stable} (respectively, \textit{semistable}) if
$$
\mu(V)\, <\, \mu(F) \ \ \text{(respectively,}\ \mu(V)\, \leq\, \mu(F)\text{)}
$$
for all coherent subsheaves $V\, \subset\, F$ with $0\, <\, {\rm rank}(V)\, <\, {\rm rank}(F)$
\cite[p.~14, Definition 1.2.11]{HL}. Also, $F$ is called \textit{polystable} if
\begin{itemize}
\item $F$ is semistable, and

\item $F$ is a direct sum of stable sheaves.
\end{itemize}

Following \cite{BS} we define:

\begin{definition}\label{def1}\mbox{}
\begin{enumerate}
\item A semistable sheaf $F$ is called a \textit{pseudo-stable sheaf} if $F$ admits a filtration of
subsheaves
$$
0\,=\, F_0\, \subsetneq\, F_1\, \subsetneq\, F_2\, \subsetneq\, \cdots \, \subsetneq\, F_{n-1}
\, \subsetneq\, F_n\,=\, F
$$
such that $F_i/F_{i-1}$ is a stable reflexive sheaf with $\mu(F_i/F_{i-1})\,=\, \mu(F)$ for every
$1\, \leq\, i\, \leq \, n$.

\item A semistable vector bundle $F$ is called a \textit{pseudo-stable bundle} if $F$ admits a filtration of
subbundles
$$
0\,=\, F_0\, \subsetneq\, F_1\, \subsetneq\, F_2\, \subsetneq\, \cdots \, \subsetneq\, F_{n-1}
\, \subsetneq\, F_n\,=\, F
$$
such that $F_i/F_{i-1}$ is a stable vector bundle with $\mu(F_i/F_{i-1})\,=\, \mu(F)$ for every
$1\, \leq\, i\, \leq \, n$.
\end{enumerate}
\end{definition}

Note that a polystable reflexive sheaf is a pseudo-stable sheaf, and a polystable
vector bundle is a pseudo-stable bundle.

\begin{remark}\label{rem1}
Since any polystable sheaf is a direct sum of stable sheaves of same slope,
and any polystable vector bundle is a direct sum of stable vector bundles of same slope,
we conclude that a semistable sheaf $F$ is pseudo-stable if it admits a filtration of subsheaves
$$
0\,=\, F_0\, \subsetneq\, F_1\, \subsetneq\, F_2\, \subsetneq\, \cdots \, \subsetneq\, F_{n-1}
\, \subsetneq\, F_n\,=\, F
$$
such that $F_i/F_{i-1}$ is a polystable reflexive sheaf with $\mu(F_i/F_{i-1})\,=\, \mu(F)$ for all
$1\, \leq\, i\, \leq \, n$. Similarly, a semistable vector bundle $F$ is a pseudo-stable bundle if $F$ admits
a filtration of subbundles
$$
0\,=\, F_0\, \subsetneq\, F_1\, \subsetneq\, F_2\, \subsetneq\, \cdots \, \subsetneq\, F_{n-1}
\, \subsetneq\, F_n\,=\, F
$$
such that $F_i/F_{i-1}$ is a polystable vector bundle with $\mu(F_i/F_{i-1})\,=\, \mu(F)$ for all
$1\, \leq\, i\, \leq \, n$.
\end{remark}

\begin{lemma}\label{lem1}
Let $F$ be a pseudo-stable sheaf on $X$. Then $F$ is reflexive.
\end{lemma}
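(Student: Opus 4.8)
The plan is to induct on the length $n$ of the pseudo-stable filtration
$0 = F_0 \subsetneq F_1 \subsetneq \cdots \subsetneq F_n = F$.
The base case $n=1$ is immediate: $F = F_1/F_0$ is a stable reflexive sheaf by definition.
For the inductive step, set $F' := F_{n-1}$, which is pseudo-stable of length $n-1$ and hence reflexive by the inductive hypothesis, and set $Q := F/F' = F_n/F_{n-1}$, which is a stable reflexive sheaf. So I have a short exact sequence $0 \to F' \to F \to Q \to 0$ with both $F'$ and $Q$ reflexive, and I must conclude that the extension $F$ is reflexive. Recall that on a normal variety a coherent sheaf $E$ is reflexive precisely when it is torsionfree and satisfies Serre's condition $S_2$; equivalently, $E$ is torsionfree and for every open $U \subset X$ with $\mathrm{codim}(X\setminus U) \geq 2$ the restriction map $E(U) \to E(U \cap V)$ is an isomorphism for all larger opens — the standard characterization via $E \cong j_*(E|_U)$ where $j \colon U \hookrightarrow X$ is the inclusion of the locus where $E$ is locally free, provided $\mathrm{depth} \geq 2$ along the complement.

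The key step is a depth/local-cohomology argument. First note $F$ is torsionfree: it is a subsheaf-extension of torsionfree sheaves — more precisely, torsion in $F$ would inject into $Q$ (since $F' $ is torsionfree) after... actually torsion of $F$ maps to $Q$, and its image is torsion in $Q$ hence zero, so the torsion sits inside $F'$, hence is zero. Next, for reflexivity it suffices to check the $S_2$ condition, i.e. that for every point $x \in X$ the local ring satisfies $\mathrm{depth}_{\mathcal O_{X,x}} F_x \geq \min(2, \dim \mathcal O_{X,x})$. Applying the depth lemma (the long exact sequence of local cohomology, or the standard fact $\mathrm{depth}(F_x) \geq \min(\mathrm{depth}(F'_x), \mathrm{depth}(Q_x) )$ for a short exact sequence $0 \to F'_x \to F_x \to Q_x \to 0$, with possibly a $+1$ correction that only helps) together with the fact that $F'$ and $Q$ are reflexive hence $S_2$, gives $\mathrm{depth}_{\mathcal O_{X,x}} F_x \geq \min(2, \dim \mathcal O_{X,x})$ at every $x$. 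Combined with torsionfreeness this yields $S_2$, hence reflexivity of $F$.

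The main obstacle, and the point requiring care, is the depth estimate at points $x$ where $\dim \mathcal O_{X,x} = 1$ versus $\geq 2$, and making sure the short-exact-sequence depth inequality is applied in the correct direction: from $0 \to F'_x \to F_x \to Q_x \to 0$ one has $\mathrm{depth}(F_x) \geq \min\{\mathrm{depth}(F'_x), \, \mathrm{depth}(Q_x) + 1\}$ — so it is enough that $F'$ is $S_2$ and $Q$ is $S_1$ (torsionfree), which both hold. Thus in fact only $F' = F_{n-1}$ needs to be reflexive for the argument, and the hypothesis on the quotients being merely reflexive-and-stable is more than enough. One should also remark that $X$ being normal (hence $R_1 + S_2$, in particular $S_2$ itself, and the structure sheaf reflexive) is what makes "torsionfree $+$ $S_2$" equivalent to "reflexive"; over a normal variety this is standard (cf. \cite{HL} or Hartshorne's treatment of reflexive sheaves). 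Once the depth bound is in place the conclusion is formal, so the write-up is short modulo citing the characterization of reflexive sheaves on normal varieties.
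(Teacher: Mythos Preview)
Your inductive strategy is correct and yields the lemma, but the route differs from the paper's. The paper proves the extension step by a direct diagram chase with double duals: given $0\to A\to E\to B\to 0$ with $A$ and $B$ reflexive, one compares it to $0\to A^{**}\to E^{**}\to B^{**}$, notes that $b^{**}$ is surjective because $b$ and the isomorphism $B\to B^{**}$ are, and then the five lemma forces $E\to E^{**}$ to be an isomorphism. This is entirely elementary and avoids any reference to depth or Serre's conditions. Your approach via ``torsionfree $+$ $S_2$ $\Leftrightarrow$ reflexive on a normal variety'' and the depth inequality for short exact sequences is also valid and arguably more conceptual, but it imports more machinery than the paper needs.

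One correction to your write-up: the sharpened inequality you state in the last paragraph,
\[
\mathrm{depth}(F_x)\ \geq\ \min\{\mathrm{depth}(F'_x),\ \mathrm{depth}(Q_x)+1\},
\]
is not the right one. For a short exact sequence $0\to M'\to M\to M''\to 0$ the depth lemma gives $\mathrm{depth}(M)\geq\min\{\mathrm{depth}(M'),\mathrm{depth}(M'')\}$; the ``$+1$'' appears in the bound for the \emph{subobject}, namely $\mathrm{depth}(M')\geq\min\{\mathrm{depth}(M),\mathrm{depth}(M'')+1\}$. So your side remark that $Q$ merely torsionfree ($S_1$) would suffice is not justified by this argument. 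This does not damage the main proof, since in your setting $Q=F_n/F_{n-1}$ is genuinely reflexive and the unadorned inequality $\mathrm{depth}(F_x)\geq\min\{\mathrm{depth}(F'_x),\mathrm{depth}(Q_x)\}$ already gives $S_2$ for $F$.
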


\begin{proof}
Let
$$
0\, \longrightarrow\, A \, \longrightarrow\, E \, \longrightarrow\, B \, \longrightarrow\, 0
$$
be a short exact sequence of coherent sheaves on $X$ such that both $A$ and $B$ are reflexive. Then it can
be shown that $E$ is reflexive. To prove this, consider the commutative diagram
$$
\begin{matrix}
0 & \longrightarrow & A & \stackrel{a}{\longrightarrow} & E & \stackrel{b}{\longrightarrow} & B & \longrightarrow & 0\\
&& \,\,\,\Big\downarrow\widehat{\iota} && \,\,\, \Big\downarrow\iota && \,\, \,\Big\downarrow\iota'\\
0 & \longrightarrow & A^{**} & \stackrel{a^{**}}{\longrightarrow} & E^{**} & \stackrel{b^{**}}{\longrightarrow} & B^{**}
\end{matrix}
$$
Since $A$ and $B$ are reflexive it follows that $\widehat{\iota}$ and $\iota'$ are isomorphisms. The homomorphism
$b^{**}$ is surjective because both $b$ and $\iota'$ are surjective. This and the fact that both
$\widehat{\iota}$ and $\iota'$ are isomorphisms together imply that $\iota$ is an isomorphism. Consequently,
the coherent sheaf $E$ is reflexive.

The sheaf $F$ admits a filtration of subsheaves
$$
0\,=\, F_0\, \subsetneq\, F_1\, \subsetneq\, F_2\, \subsetneq\, \cdots \, \subsetneq\, F_{n-1}
\, \subsetneq\, F_n\,=\, F
$$
such that $F_i/F_{i-1}$ is reflexive for all $1\, \leq\, i\, \leq \, n$. Since $F_1$ and $F_2/F_1$
are reflexive, the above observation implies that $F_2$ is reflexive. Now we inductively observe that
if $F_i$ is reflexive, then $F_{i+1}$ is reflexive. Therefore, $F$ is reflective.
\end{proof}

\section{Maximal polystable subsheaves of semistable sheaves}\label{se3}

We first recall a proposition from \cite{BDL}.

\begin{proposition}[{\cite[p.~1034, Proposition 3.1]{BDL}}]\label{prop1}
Let $F$ be a reflexive semistable sheaf on $X$. Then there is a unique coherent subsheaf $V\, \subset\, F$
satisfying the following three conditions:
\begin{enumerate}
\item $V$ is polystable with $\mu(V)\,=\, \mu(F)$,

\item $F/V$ is torsionfree, and

\item $V$ is maximal among all subsheaves of $F$ satisfying the above two conditions.
\end{enumerate}
\end{proposition}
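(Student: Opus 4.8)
The plan is to realize $V$ as a maximal element of the family
$$
\mathcal{P}\,:=\,\bigl\{\,V'\,\subseteq\, F\ :\ V'\ \text{is polystable},\ \mu(V')\,=\,\mu(F),\ F/V'\ \text{is torsionfree}\,\bigr\}\,,
$$
which is nonempty since $0\,\in\,\mathcal{P}$. A preliminary observation is that every $V'\,\in\,\mathcal{P}$ is automatically reflexive: as $V'$ is torsionfree on the normal variety $X$, the cokernel of $V'\,\hookrightarrow\,(V')^{**}$ is supported in codimension $\geq 2$, and since $F\,=\,F^{**}$ this inclusion extends uniquely to an injection $(V')^{**}\,\hookrightarrow\, F$ containing $V'$; then $(V')^{**}/V'$ is a torsion subsheaf of the torsionfree sheaf $F/V'$, hence zero, so $V'\,=\,(V')^{**}$. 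In particular each stable direct summand of such a $V'$ is reflexive too.

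Since coherent subsheaves of $F$ satisfy the ascending chain condition, $\mathcal{P}$ has at least one maximal element. The heart of the argument is to show that $\mathcal{P}$ is closed under the operation $(V_1,V_2)\,\longmapsto\, V_1\vee V_2$, where $V_1\vee V_2$ denotes the saturation of $V_1+V_2$ inside $F$; granting this, uniqueness is immediate, because if $V$ and $V'$ are two maximal elements then $V\vee V'\,\in\,\mathcal{P}$ contains both, whence $V\,=\,V\vee V'\,=\,V'$. So fix $V_1,V_2\,\in\,\mathcal{P}$ and put $W\,:=\,V_1\vee V_2$. By construction $F/W$ is torsionfree, hence $W$ is reflexive by the observation above. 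Moreover $\mu(W)\,=\,\mu(F)$: the sheaf $V_1+V_2$ is a quotient of the semistable sheaf $V_1\oplus V_2$ of slope $\mu(F)$, so $\mu(V_1+V_2)\,\geq\,\mu(F)$; it is also a subsheaf of the semistable sheaf $F$, so $\mu(V_1+V_2)\,\leq\,\mu(F)$; and $W$, being its saturation, has the same slope. It remains to prove that $W$ is polystable.

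To this end, note that the torsion sheaf $W/(V_1+V_2)$ has degree $0$, hence is supported in codimension $\geq 2$, so $W$ and $V_1+V_2$ agree on a suitable big open $U\,\hookrightarrow\, X$; since $W$ is reflexive this yields $W\,=\,j_*\bigl(W|_U\bigr)\,=\,j_*\bigl((V_1+V_2)|_U\bigr)\,=\,(V_1+V_2)^{**}$. Writing $V_1+V_2$ as the image of the canonical surjection $P\,:=\,V_1\oplus V_2\,\twoheadrightarrow\, V_1+V_2$, with kernel $K$, the sheaf $P$ is polystable and reflexive, the quotient $P/K\,=\,V_1+V_2$ is torsionfree of slope $\mu(F)$, so $K$ is reflexive (same argument as in the first paragraph) and $\mu(K)\,=\,\mu(F)$; and $W\,=\,(P/K)^{**}$. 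Thus it suffices to prove that $(P/K)^{**}$ is polystable, and the essential point is that $K$ is a direct summand of $P$. For this I would decompose $P\,=\,\bigoplus_j S_j^{\oplus m_j}$ into isotypic blocks with $S_j$ pairwise non-isomorphic stable reflexive sheaves of slope $\mu(F)$, and use $\operatorname{Hom}(S_j,S_l)\,=\,0$ for $j\,\neq\, l$ and $\operatorname{End}(S_j)\,=\,k$: in the (abelian, finite-length) category of reflexive semistable sheaves of slope $\mu(F)$ taken up to isomorphism in codimension $1$, the object $P$ is semisimple, so $K$ is a direct summand of $P$ in that category; concretely, $K|_U$ is a direct summand of $P|_U$ for some big open $U$. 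Since $P$ and $K$ are reflexive, the corresponding retraction $P|_U\,\to\, K|_U$ extends to a retraction $P\,\to\, K$ on all of $X$ (it agrees with the given one on a dense open and $K$ is torsionfree, so it is again a retraction), and hence $P\,=\,K\oplus K'$ with $K'\,:=\,\ker(P\to K)$. Then $(P/K)^{**}\,\cong\, K'$ is a direct summand of the polystable sheaf $P$, so it is polystable by Krull--Schmidt in this finite-length setting.

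The step I expect to be the main obstacle is this last one — showing that $K$ splits off $P$, equivalently that the reflexive hull of a quotient of a polystable sheaf of slope $\mu(F)$ by a subsheaf with torsionfree quotient is again polystable. Making it rigorous forces one to pass carefully between honest sheaf-theoretic quotients and their reflexive hulls and to keep track of the codimension-$\geq 2$ torsion that $F/(V_1+V_2)$ and $P/K$ may carry, whereas all the earlier steps are comparatively routine. An alternative route that avoids part of this bookkeeping is to restrict everything to a general complete intersection curve $C\,\subseteq\, X$ of sufficiently high multidegree, where $F|_C$ is semistable by Mehta--Ramanathan and the statement reduces to the classical one for semistable vector bundles on a curve; one then transports the conclusion back to $X$ using that reflexive sheaves on a normal variety are determined in codimension $1$ and that slopes are computed by intersecting first Chern classes with $L^{n-1}$.
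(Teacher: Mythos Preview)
The paper does not supply a proof of this proposition: it is quoted from \cite[Proposition~3.1]{BDL}, with only the remark that, although the base field there is $\mathbb{C}$, the argument goes through unchanged over any algebraically closed field. So there is no in-paper argument to compare your attempt against.

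For what it is worth, your approach is the standard ``socle'' construction and is sound in outline. The one step you correctly flag as delicate --- that a saturated subsheaf $K$ of a polystable reflexive sheaf $P$ with $\mu(K)=\mu(P)$ is a direct summand --- can be made precise without appealing to an abelian category that you have not quite constructed (the full subcategory of reflexive semistable sheaves of slope $\mu$ is not literally abelian: sheaf-theoretic cokernels may fail to be reflexive). Argue instead by induction on $\operatorname{rank}(K)$. Pick a stable reflexive $S\subset K$ of slope $\mu(P)$ with $K/S$ torsionfree (take the saturation in $K$ of the first step of a Jordan--H\"older filtration; it is reflexive because $K$ is, and stable because stability only depends on the sheaf in codimension~one). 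Some projection $S\to S_i$ onto a stable summand of $P$ is nonzero, hence an isomorphism of stable reflexive sheaves of equal slope; composing with its inverse gives a retraction $P\to S$, so $P=S\oplus P'$. Since $S\subset K$ one finds $K=S\oplus(K\cap P')$, with $K\cap P'$ saturated in $P'$ and $P'/(K\cap P')\cong P/K$, and the induction closes. A by-product is that $P/K\cong P''$ is already reflexive and polystable, so $V_1+V_2$ is itself reflexive and equal to its saturation $W$ in $F$; the reflexive-hull bookkeeping you were bracing for does not actually arise.

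Your Mehta--Ramanathan alternative is also viable, but note that transporting the conclusion back from a curve requires knowing that the restriction of the socle to $C$ is the socle of the restriction, which is essentially the same splitting statement in disguise.
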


A few clarifications on Proposition \ref{prop1} are in order.

\begin{remark}\label{rempr}\mbox{}
\begin{enumerate}
\item Although the base field in Proposition 3.1 of \cite{BDL} is $\mathbb C$, it is straight-forward
to check that the proof of Proposition 3.1 in \cite{BDL} gives Proposition \ref{prop1}.

\item Now we use the notation of \cite[p.~1034, Proposition 3.1]{BDL}. For the subsheaf $E'\, \subset\, E$
in \cite[Proposition 3.1]{BDL}, the quotient $E/E'$ is actually torsionfree because $E'$ is a maximal
polystable subsheaf, as asserted in \cite[Proposition 3.1]{BDL}. Note that if $E/E'$ has torsion, then the
inverse image in $E$ of the torsion subsheaf $(E/E')_{\rm torsion}\, \subset\, E/E'$, under the quotient map
$E\, \longrightarrow\, E/E'$, is a polystable subsheaf containing $E'$.

\item A similar result for Gieseker semistable sheaves is known (see \cite[p.~23, Lemma 1.5.5]{HL}).
\end{enumerate}
\end{remark}

In Proposition \ref{prop1} the assumption that $F$ is reflexive is essential, as shown by the
following example.

\subsection{An example}\label{se3.1}

Take $(X,\, L)$ with $\dim X\, \geq\, 2$ such that there are two line bundle $A$ and $B$ on $X$ satisfying
the following two conditions:
\begin{itemize}
\item $A\, \not=\, B$, and

\item $\text{degree}(A)\,=\, \text{degree}(B)$.
\end{itemize}
Fix a point $x\, \in\, X$, and take a line $S\, \subset\, (A\oplus B)_x\,=\, A_x\oplus B_x$ such that
$$
S\, \not=\, A_x \ \ \text{ and }\ \ S\, \not=\, B_x\, .
$$
Consider the composition of homomorphisms
$$
A\oplus B \, \longrightarrow\, (A\oplus B)_x \, \longrightarrow\, (A\oplus B)_x/S\, ;
$$
both $(A\oplus B)_x$ and $S$ are torsion sheaves supported at $x$.
Let $F$ denote the kernel of this composition of homomorphisms. We list some properties of $F$:
\begin{itemize}
\item $F$ is torsionfree as it is a subsheaf of $A\oplus B$.

\item $F$ is not reflexive, because $F^{**}\,=\, A\oplus B$; here the condition that
$\dim X\, \geq\, 2$ is used.

\item $\mu(F)\,=\, \mu(A\oplus B)\,=\, \mu(A)\,=\, \mu(B)$.

\item $F/(F\cap A)\,=\, B$ and $F/(F\cap B)\,=\, A$.

\item $F$ is semistable because $A\oplus B$ is so.

\item $F$ is not polystable. This is because the short exact sequence
$$
0\, \longrightarrow\, F\cap A\, \longrightarrow\, F \, \longrightarrow\, B
\, \longrightarrow\, 0
$$
does not split.
\end{itemize}
Therefore, if we set $V\,=\, F\cap A$ or $V\,=\, F\cap B$, then the following three conditions
are valid:
\begin{enumerate}
\item $V$ is polystable with $\mu(V)\,=\, \mu(F)$,

\item $F/V$ is torsionfree, and

\item $V$ is maximal among all subsheaves satisfying the above two conditions.
\end{enumerate}
Hence Proposition \ref{prop1} fails for $F$.

\subsection{A unique maximal reflexive subsheaf}\label{se3.2}

Although Proposition \ref{prop1} fails for general non-reflexive torsionfree sheaves, the following variation
of it holds.

\begin{proposition}\label{prop2}
Let $F$ be a torsionfree semistable sheaf on $X$.
Assume that $F$ contains a polystable reflexive subsheaf $F'$ with $\mu(F')\,=\, \mu(F)$.
Then there is a unique coherent subsheaf $V\, \subset\, F$
satisfying the following four conditions:
\begin{enumerate}
\item $V$ is reflexive,

\item $V$ is polystable with $\mu(V)\,=\, \mu(F)$,

\item $F/V$ is torsionfree, and

\item $V$ is maximal among all subsheaves of $F$ satisfying the above three conditions.
\end{enumerate}
\end{proposition}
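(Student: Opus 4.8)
The plan is to realize the desired $V$ as the unique largest member of the family $\mathcal{S}$ of all reflexive polystable subsheaves $W\, \subseteq\, F$ with $\mu(W)\,=\,\mu(F)$ for which $F/W$ is torsionfree. If one shows that $\mathcal{S}$ is nonempty and closed under the formation of the sum $W_1+W_2$ inside $F$, then, since the subsheaves of the coherent sheaf $F$ satisfy the ascending chain condition, $\mathcal{S}$ has a member $V$ maximal for inclusion; closure under sums forces $V\, \supseteq\, W$ for every $W\, \in\, \mathcal{S}$ (otherwise $V+W$ would be a strictly larger member), so $V$ contains every subsheaf satisfying conditions (1)--(3), which yields at once existence, maximality and uniqueness. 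Throughout I would use two standard facts about reflexive sheaves on the normal variety $X$: a direct summand of a reflexive (respectively, polystable) sheaf is again reflexive (respectively, polystable); and a reflexive subsheaf $R$ of a torsionfree sheaf $T$ such that $T/R$ is supported in codimension $\ge 2$ must equal $T$, because a reflexive sheaf is canonically the pushforward of its restriction to the complement of any closed subset of codimension $\ge 2$.

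To see that $\mathcal{S}\,\neq\,\emptyset$, I would start from the hypothesized polystable reflexive subsheaf $F'\, \subseteq\, F$ with $\mu(F')\,=\,\mu(F)$ and pass to its saturation $\overline{F'}$ in $F$, so that $F/\overline{F'}$ is torsionfree. Semistability of $F$ gives $\mu(F')\,\le\,\mu(\overline{F'})\,\le\,\mu(F)\,=\,\mu(F')$; hence $\overline{F'}$ and $F'$ have the same rank and degree, so the torsion sheaf $\overline{F'}/F'$ is supported in codimension $\ge 2$, and the reflexivity of $F'$ forces $\overline{F'}\,=\,F'$. Thus $F/F'$ is torsionfree and $F'\, \in\, \mathcal{S}$.

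For closure under sums, take $W_1,\, W_2\, \in\, \mathcal{S}$ and consider the surjection $W_1\oplus W_2\, \longrightarrow\, W_1+W_2\, \subseteq\, F$ sending $(w_1,w_2)$ to $w_1-w_2$, whose kernel is isomorphic to $W_1\cap W_2$. The sheaf $W_1\oplus W_2$ is polystable of slope $\mu(F)$; since $W_1+W_2$ and $W_1\cap W_2$ are subsheaves of $F$ and of $W_1\oplus W_2$ respectively, each has slope $\le\mu(F)$, and additivity of rank and degree along $0\,\longrightarrow\, W_1\cap W_2\,\longrightarrow\, W_1\oplus W_2\,\longrightarrow\, W_1+W_2\,\longrightarrow\, 0$ forces $\mu(W_1\cap W_2)\,=\,\mu(W_1+W_2)\,=\,\mu(F)$, so in particular $W_1\cap W_2$ is semistable of slope $\mu(F)$. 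Working inside the abelian category of semistable sheaves of slope $\mu(F)$, whose semisimple objects are precisely the polystable sheaves, the subobject $W_1\cap W_2$ of the semisimple object $W_1\oplus W_2$ is a direct summand; hence $W_1+W_2$ is isomorphic to a complementary summand and is therefore polystable and reflexive. Finally, the same saturation argument as in the previous paragraph, applied to $W_1+W_2$, shows that $F/(W_1+W_2)$ is torsionfree, so $W_1+W_2\, \in\, \mathcal{S}$, which finishes the verification.

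The step I expect to be the main obstacle is the repeated interchange between ``reflexive subsheaf'' and ``saturated subsheaf'': a reflexive subsheaf of a torsionfree sheaf need not be saturated, so one must use the slope equality $\mu(W)\,=\,\mu(F)$ together with the codimension-$\ge 2$ extension property of reflexive sheaves on the normal variety $X$ to conclude that the saturations occurring above introduce nothing new. The same mechanism is what guarantees that the sum of two members of $\mathcal{S}$ remains polystable rather than merely semistable, via the semisimplicity of $W_1\oplus W_2$ in the category of semistable sheaves of slope $\mu(F)$.
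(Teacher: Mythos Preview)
Your approach is correct and takes a genuinely different route from the paper's. The paper passes to the reflexive hull $F^{**}$, applies the known reflexive case (Proposition~\ref{prop1}) to obtain the socle $W\subset F^{**}$, and then defines $V$ as the subsheaf of $W$ generated by those indecomposable (hence stable) direct summands of $W$ that already lie in $F$; this gives a concrete description of $V$ in terms of the socle of $F^{**}$. Your argument stays inside $F$ throughout and instead establishes that the family $\mathcal{S}$ is closed under sums, which is more intrinsic and avoids the detour through $F^{**}$; the price is that you must verify the closure-under-sums statement directly.

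One point deserves care. The category of $\mu$-semistable torsionfree sheaves of a fixed slope is \emph{not} abelian in general (cokernels can acquire torsion), so the phrase ``working inside the abelian category of semistable sheaves of slope $\mu(F)$'' is imprecise; this is exactly where $\mu$-semistability differs from Gieseker semistability. The fact you actually need --- that a subsheaf $S$ of a polystable reflexive sheaf $P$ with $\mu(S)=\mu(P)$ and $P/S$ torsionfree is a direct summand of $P$ --- is nevertheless true, but requires an argument. One way: first observe that $S$ is itself reflexive (via the depth/$S_2$ characterisation on the normal variety $X$, using that $P$ is reflexive and $P/S$ is torsionfree), hence Proposition~\ref{prop1} applies to $S$ and gives its socle $V_S\subset S$; since every nonzero homomorphism between stable reflexive sheaves of the same slope is an isomorphism, the inclusion $V_S\hookrightarrow P$ splits by elementary linear algebra over the endomorphism algebra of $P$, and a short induction (or a maximality argument for $V_S$) then forces $S=V_S$. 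With this justification in place of the abelian-category appeal, your proof goes through.
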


\begin{proof}
The coherent sheaf $F^{**}$ is reflexive and semistable; also, we have $\mu(F^{**})\,=\, \mu(F)$.
Apply Proposition \ref{prop1} to $F^{**}$. Let
$$
W\,\subset\, F^{**}
$$
be the unique polystable subsheaf satisfying the three conditions in Proposition \ref{prop1}.

Let $\mu_0\, :=\, \mu(W)\,=\, \mu(F)$.
Since $W$ is polystable, it is a direct sum of stable sheaves of slope $\mu_0$. Let
${\mathcal C}$ denote the space of all coherent subsheaves $E\, \subset\, W$ satisfying the following two conditions:
\begin{enumerate}
\item $E$ is a direct summand of $W$, meaning there is a coherent subsheaf $E'\, \subset\, W$ such that the
natural homomorphism $E\oplus E'\, \longrightarrow\, W$ is an isomorphism.

\item $E$ is indecomposable, meaning if $E\,=\, E_1\oplus E_2$, then either $E_1\,=\, 0$ or $E_2\,=\, 0$.
\end{enumerate}
Note that any $E\, \in\, {\mathcal C}$ is stable with $\mu(E)\,=\, \mu_0$. Moreover, $E$ is reflexive, because it
is direct summand of the reflexive sheaf $W$. Since $W$ is polystable, for any subset ${\mathcal C}'\, \subset\,
{\mathcal C}$, the coherent subsheaf of $W$ generated by all subsheaves $E\,\in\, {\mathcal C}'$ is a direct
summand of $W$. Let
\begin{equation}\label{e1}
{\mathcal C}_F\, \subset\, {\mathcal C}
\end{equation}
be the subset consisting of all subsheaves $E\, \in\, {\mathcal C}$ such that $E\, \subset\, F$.
The space ${\mathcal C}_F$ is nonempty, because 
$F$ contains a polystable reflexive subsheaf $F'$ with $\mu(F')\,=\, \mu(F)$.

Let
\begin{equation}\label{e2}
V\, \subset\, W
\end{equation}
be the coherent subsheaf of $W$ generated by all subsheaves $E\,\in\, {\mathcal C}_F$, where ${\mathcal C}_F$
is defined in \eqref{e1}. We will show that $V$ defined in \eqref{e2} satisfies all the conditions in the proposition.

Recall from Proposition \ref{prop1} that $W$ is reflexive and polystable with $\mu(W)\,=\, \mu_0$.
Since $V$ in \eqref{e2} is a direct summand of $W$, we conclude that $V$ is reflexive and polystable with
$\mu(V)\,=\, \mu_0$.

We note that $F^{**}/V$ fits in the short exact sequence
$$
0\, \longrightarrow\, W/V \, \longrightarrow\, F^{**}/V \, \longrightarrow\, F^{**}/W \, \longrightarrow\, 0\, .
$$
Since both $F^{**}/W$ and $W/V$ are torsionfree, it follows that $F^{**}/V$ is also torsionfree. Hence the
subsheaf $F/V\, \subset\, F^{**}/V$ is torsionfree.

Let $\widetilde{V}\, \subset\,F$ be a subsheaf satisfying the following three conditions:
\begin{enumerate}
\item $\widetilde{V}$ is reflexive,

\item $\widetilde{V}$ is polystable with $\mu(\widetilde{V})\,=\, \mu(F)$, and

\item $F/\widetilde{V}$ is torsionfree.
\end{enumerate}
Then from the properties of $W$ we know that $\widetilde{V}\, \subset\, W$. From this it follows
that $\widetilde{V}\, \subset\, V$. This proves the uniqueness of $V$ satisfying the four
conditions in the proposition.
\end{proof}

\begin{remark}\label{rem2}
In Proposition \ref{prop2}, if $F$ does not contain any polystable reflexive subsheaf $F'$ with $\mu(F')
\,=\, \mu(F)$, then we set the maximal subsheaf
$V$ in Proposition \ref{prop2} to be the zero subsheaf $0\, \subset\, F$. To
explain this convention, consider the sheaf $F$ in Section \ref{se3.1}. Note that $F$
does not contain any reflexive subsheaf $F'$ with $\mu(F')\,=\, \mu(F)$. Hence, by this convention,
the subsheaf $V$ of $F$ given by Proposition \ref{prop2}
is the zero subsheaf $0\, \subset\, F$.
\end{remark}

\subsection{A unique maximal locally free subsheaf}\label{se3.3}

Proposition \ref{prop2} has the following variation.

\begin{proposition}\label{prop3}
Let $F$ be a torsionfree semistable sheaf on $X$.
Assume that $F$ contains a polystable locally free subsheaf $F''$ with $\mu(F'')\,=\, \mu(F)$.
Then there is a unique coherent subsheaf $V^f\, \subset\, F$
satisfying the following four conditions:
\begin{enumerate}
\item $V^f$ is locally free,

\item $V^f$ is polystable with $\mu(V^f)\,=\, \mu(F)$,

\item $F/V^f$ is torsionfree, and

\item $V^f$ is maximal among all subsheaves of $F$ satisfying the above three conditions.
\end{enumerate}
\end{proposition}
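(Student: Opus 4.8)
The plan is to run the argument of Proposition~\ref{prop2}, keeping only the locally free part of the relevant polystable sheaf. As in that proof, set $\mu_0\,:=\,\mu(F)$, pass to $F^{**}$, apply Proposition~\ref{prop1} to get the unique maximal polystable subsheaf $W\,\subset\, F^{**}$ with $\mu(W)\,=\,\mu_0$ and $F^{**}/W$ torsionfree, and let $\mathcal C$ be the set of indecomposable direct summands of $W$, each of which is a stable reflexive sheaf of slope $\mu_0$. Let $\mathcal C^f_F\,\subset\,\mathcal C$ be the subset of those $E\,\in\,\mathcal C$ that are locally free and satisfy $E\,\subset\, F$, and let $V^f\,\subset\, W$ be the coherent subsheaf generated by all $E\,\in\,\mathcal C^f_F$. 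I would then verify that $V^f$ has the four required properties.

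For conditions (1) and (2): the subsheaf of $W$ generated by any subfamily of $\mathcal C$ is a direct summand of $W$ (as recalled in the proof of Proposition~\ref{prop2}), so $V^f$ is a direct summand of the reflexive polystable sheaf $W$, hence reflexive and polystable with $\mu(V^f)\,=\,\mu_0$. Writing $W$ as a direct sum of stable reflexive sheaves, every element of $\mathcal C^f_F$ is isomorphic to one of those summands and that summand is locally free (local freeness being intrinsic), so $V^f$ is a direct sum of stable locally free sheaves, hence locally free. For condition (3), the exact sequence $0\,\to\, W/V^f\,\to\, F^{**}/V^f\,\to\, F^{**}/W\,\to\, 0$ shows $F^{**}/V^f$ is torsionfree, whence $F/V^f\,\subset\, F^{**}/V^f$ is torsionfree. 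Finally, if $F$ contains the hypothesized polystable locally free subsheaf $F''$ with $\mu(F'')\,=\,\mu(F)$, then (using that $F$ is semistable and $F''$ is reflexive) a reflexive-hull argument shows that the saturation of $F''$ in $F$ is $F''$ itself, so $F/F''$ is torsionfree; by the proof of Proposition~\ref{prop2} (which shows $W$ contains every reflexive polystable subsheaf of slope $\mu_0$ with torsionfree quotient) we get $F''\,\subset\, W$, and then each stable summand of $F''$ lies in $\mathcal C^f_F$, so $\mathcal C^f_F\,\neq\,\emptyset$ and $\mu(V^f)\,=\,\mu_0$.

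For maximality and uniqueness (condition (4)): let $\widetilde V\,\subset\, F$ be any locally free polystable subsheaf with $\mu(\widetilde V)\,=\,\mu_0$ and $F/\widetilde V$ torsionfree. Since $\widetilde V$ is reflexive, the proof of Proposition~\ref{prop2} gives $\widetilde V\,\subset\, W$; being polystable of slope $\mu_0$ inside the polystable sheaf $W$, it is a direct summand of $W$, hence a direct sum of members of $\mathcal C$, each of which is locally free (a summand of the locally free $\widetilde V$) and contained in $F$, hence belongs to $\mathcal C^f_F$. Therefore $\widetilde V\,\subset\, V^f$, which establishes both the maximality of $V^f$ and its uniqueness.

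The main obstacle I anticipate is the structural bookkeeping that makes ``the locally free part'' well behaved: one needs that the subsheaf of $W$ generated by a family of stable direct summands is again a direct summand (so it remains reflexive and polystable), and that discarding the non-locally-free summands yields an honestly locally free sheaf rather than merely a reflexive one. Both rely on the semisimplicity of the category of torsionfree semistable sheaves of slope $\mu_0$ and on the fact that local freeness of a polystable sheaf can be tested on its stable summands. A secondary technical point, needed for nonemptiness and for the containments $F''\,\subset\, W$ and $\widetilde V\,\subset\, W$, is the reflexive-hull argument showing that a reflexive subsheaf of $F$ of slope $\mu(F)$ automatically has torsionfree quotient in both $F$ and $F^{**}$.
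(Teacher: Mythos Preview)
Your proposal is correct and follows essentially the same construction as the paper: define $V^f$ as the subsheaf of $W$ generated by the locally free members of $\mathcal{C}_F$, then verify the four conditions and maximality. The only noteworthy difference is in justifying that $V^f$ is locally free: you argue directly that its stable summands, being isomorphic to locally free summands of $W$, are themselves locally free, whereas the paper invokes Atiyah's Krull--Schmidt theorem to extract a canonical maximal locally free direct summand $W^f\subset W$ (unique because nonzero maps between stable reflexive sheaves of equal slope are isomorphisms, so there are no maps between the locally free and non-locally-free parts) and observes that $V^f$ is a direct summand of $W^f$.
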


\begin{proof}
The proof is very similar to the proof of Proposition \ref{prop2}.
Take $W\, \subset\, F^{**}$ as in the proof of Proposition \ref{prop2}. Consider
${\mathcal C}_F$ defined in \eqref{e1}. Let
\begin{equation}\label{e3}
{\mathcal C}^f_F\, \subset\, {\mathcal C}_F
\end{equation}
be the subset consisting of all subsheaves $E\, \in\, {\mathcal C}_F$ such that $E$ is locally free.
We note that the set ${\mathcal C}^f_F$ is nonempty, because
$F$ contains a polystable locally free subsheaf $F''$ with $\mu(F'')\,=\, \mu(F)$.

Let
\begin{equation}\label{e4}
V^f\, \subset\, W
\end{equation}
be the coherent subsheaf of $W$ generated by all subsheaves $E\,\in\, {\mathcal C}^f_F$, where ${\mathcal C}^f_F$
is defined in \eqref{e3}.

We will describe an alternative construction of the subsheaf $V^f$ in \eqref{e4}.

A theorem of Atiyah says that any coherent sheaf ${\mathcal E}$ on $X$ can be expressed as a direct sum
of indecomposable sheaves, and if
$$
{\mathcal E}\,=\, \bigoplus_{i=1}^{m_1} {\mathcal E}^1_i\,=\,\bigoplus_{i=1}^{m_2} {\mathcal E}^2_i
$$
are two decompositions of ${\mathcal E}$ into direct sum of indecomposable sheaves, then
\begin{itemize}
\item $m_1\,=\, m_2$, and

\item there is a permutation $\sigma$ of $\{1,\, \cdots,\, m_1\}$ such that
${\mathcal E}^1_i$ is isomorphic to ${\mathcal E}^2_{\sigma(i)}$ for all $1\, \leq\, i\, \leq\, m_1$.
\end{itemize}
(See \cite[p.~315, Theorem 2(i)]{At}.) From this theorem of Atiyah it follows immediately that any
coherent sheaf ${\mathcal E}$ on $X$ can be expressed as
\begin{equation}\label{ed}
{\mathcal E}\,=\, {\mathcal E}^f\oplus {\mathcal E}'\, ,
\end{equation}
where ${\mathcal E}^f$ is locally free, and every indecomposable component of ${\mathcal E}'$ is
\textit{not} locally free. Furthermore, the decomposition of ${\mathcal E}$ in \eqref{ed} is unique if 
\begin{itemize}
\item there is no nonzero homomorphism from ${\mathcal E}^f$ to ${\mathcal E}'$, and

\item there is no nonzero homomorphism from ${\mathcal E}'$ to ${\mathcal E}^f$.
\end{itemize}
Note that this condition is satisfied for $W$; this is because any nonzero homomorphism between two stable
reflexive sheaves of same slope is an isomorphism.

Let
$$
W^f\, \subset\, W
$$
be the (unique) maximal locally free component of $W$. So $W$ is a direct sum of stable vector bundles of slope
$\mu_0$. The subsheaf $V^f$ in \eqref{e4} is generated by all direct summands of $W^f$ that are contained in $F$.

Since $V^f$ is a direct summand of $W^f$, it follows that $V^f$ is locally free. It clearly satisfies
all the conditions in the proposition, and it is unique.
\end{proof}

\begin{remark}\label{rem3}
In Proposition \ref{prop3}, if $F$ does not contain any polystable locally free subsheaf $F''$ with $\mu(F'')
\,=\, \mu(F)$, then we set the maximal subsheaf $V^f$ in Proposition \ref{prop3} to be the zero subsheaf
$0\, \subset\, F$. For the sheaf $F$ in Section \ref{se3.1}, the subsheaf given by
Proposition \ref{prop3} is the zero subsheaf $0\, \subset\, F$.
\end{remark}

\section{Maximal pseudo-stable subsheaves}

\begin{theorem}\label{thm1}
Let $F$ be a torsionfree semistable sheaf on $X$.
Assume that $F$ contains a polystable reflexive subsheaf $F'$ with $\mu(F')\,=\, \mu(F)$.
Then there is a unique pseudo-stable subsheaf $V\, \subset\, F$
satisfying the following three conditions:
\begin{enumerate}
\item $\mu(V)\,=\, \mu(F)$,

\item $F/V$ is torsionfree, and

\item $V$ is maximal among all pseudo-stable subsheaves of $F$ satisfying the above two conditions.
\end{enumerate}
\end{theorem}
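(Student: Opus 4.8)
The plan is to build $V$ by iterating Proposition~\ref{prop2} and to derive both maximality and uniqueness from the corresponding properties at each stage. Concretely, set $F^{(0)}\, =\, F$ and apply the first part of Proposition~\ref{prop-i} (i.e. Proposition~\ref{prop2}) to obtain the unique maximal reflexive polystable subsheaf $V_1\, \subset\, F^{(0)}$ with $\mu(V_1)\, =\, \mu(F)$ and $F^{(0)}/V_1$ torsionfree. Now observe that $F^{(0)}/V_1$ is again torsionfree and semistable with the same slope $\mu(F)$ (semistability of the quotient follows from semistability of $F$ together with $\mu(V_1)\, =\, \mu(F)$, and since $F/V_1$ is nonzero when $V_1\, \neq\, F$). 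If $F^{(1)}\, :=\, F/V_1$ contains a polystable reflexive subsheaf of slope $\mu(F)$, apply Proposition~\ref{prop2} again to get $\overline{V_2}\, \subset\, F^{(1)}$, and let $V_2\, \subset\, F$ be its preimage under $F\, \to\, F^{(1)}$. Continuing, we get an increasing chain $V_1\, \subsetneq\, V_2\, \subsetneq\, \cdots$ of subsheaves of $F$, each $V_i/V_{i-1}$ being a polystable reflexive sheaf of slope $\mu(F)$, and each $F/V_i$ torsionfree. By Remark~\ref{rem1} every $V_i$ so constructed is pseudo-stable; and since $\mathrm{rank}(V_i)$ strictly increases and is bounded by $\mathrm{rank}(F)$, the process terminates at some $V\, =\, V_n$, characterized by the property that $F/V$ contains no polystable reflexive subsheaf of slope $\mu(F)$.

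Next I would verify that this $V$ satisfies the three conditions. Condition (1), $\mu(V)\, =\, \mu(F)$, is immediate since all successive quotients have slope $\mu(F)$. Condition (2), that $F/V$ is torsionfree, holds because it did at every stage (the final quotient $F/V$ equals $F^{(n)}$). For condition (3), maximality, suppose $\widetilde{V}\, \subset\, F$ is pseudo-stable with $\mu(\widetilde{V})\, =\, \mu(F)$ and $F/\widetilde{V}$ torsionfree. I claim $\widetilde{V}\, \subseteq\, V$. The first term $\widetilde{V}_1$ of the pseudo-stable filtration of $\widetilde{V}$ is a stable reflexive sheaf of slope $\mu(F)$, hence (being a polystable reflexive subsheaf) is contained in $V_1$ by the maximality clause of Proposition~\ref{prop2}. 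Then $\widetilde{V}/\widetilde{V}_1$ maps to $F/V_1\, =\, F^{(1)}$; one checks this map is injective (since $\widetilde{V}\cap V_1$ contains $\widetilde{V}_1$, and both $\widetilde{V}/\widetilde{V}_1$ and $V_1/\widetilde{V}_1$ are subsheaves of the torsionfree sheaf $F/\widetilde{V}_1$ — more carefully, one shows $\widetilde{V}\cap V_1\, =\, \widetilde{V}_1$ using that $V_1/\widetilde{V}_1$ has slope $\mu(F)$ and injects into $F/\widetilde{V}$ which is torsionfree, forcing it to be $0$ or to violate maximality of the pseudo-stable structure — the cleanest route is: $\widetilde{V}/(\widetilde{V}\cap V_1)\, \hookrightarrow\, F/V_1$ torsionfree, and $\widetilde{V}\cap V_1$ is a reflexive polystable sheaf of slope $\mu(F)$ inside the reflexive sheaf $\widetilde{V}_1$'s saturation, so $\widetilde{V}\cap V_1\, =\, \widetilde{V}_1$). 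Inductively, the image of $\widetilde{V}$ in $F^{(i)}$ is a pseudo-stable subsheaf of slope $\mu(F)$ with torsionfree quotient, hence contained in $\overline{V_{i+1}}$, which gives $\widetilde{V}\, \subseteq\, V_{i+1}$; iterating yields $\widetilde{V}\, \subseteq\, V$. Uniqueness is then automatic: any $V'$ satisfying (1)--(3) is both $\subseteq\, V$ (by maximality of $V$) and $\supseteq\, V$ (by maximality of $V'$ applied to $V$), so $V'\, =\, V$.

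The main obstacle I anticipate is the inductive step in the maximality argument, specifically controlling the intersection $\widetilde{V}\cap V_i$ and showing that passing to the quotient $F^{(i)}$ does not lose any part of $\widetilde{V}$. The subtlety is that $\widetilde{V}$ being pseudo-stable does not mean its image in $F^{(i)}$ is literally one of the $V_j/V_{j-1}$; one has to argue that this image is \emph{some} pseudo-stable subsheaf with torsionfree quotient and then invoke maximality of the Proposition~\ref{prop2}-output at that level. The key technical lemma to isolate is: if $0\, \to\, A\, \to\, B\, \to\, C\, \to\, 0$ with $A$ pseudo-stable of slope $\mu$, $B/A$ torsionfree, $B$ semistable of slope $\mu$, and $A$ maximal-reflexive-polystable in $B$ of slope $\mu$ (so $A\, =\, V_1$), then for any pseudo-stable $\widetilde{V}\, \subseteq\, B$ of slope $\mu$ with $B/\widetilde{V}$ torsionfree, the image of $\widetilde{V}$ in $C\, =\, B/A$ is pseudo-stable of slope $\mu$ with torsionfree quotient, and $\widetilde{V}\, \subseteq\, A\, +\, (\text{preimage of that image})$. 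Once this is in hand the induction on $\mathrm{rank}(F)$ closes cleanly; the finiteness of the iteration is never in question because ranks strictly increase.
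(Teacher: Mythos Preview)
Your construction and overall strategy are exactly those of the paper: build $V$ by iterating Proposition~\ref{prop2}, then prove maximality by showing that any pseudo-stable $\widetilde V\subset F$ with torsionfree quotient satisfies $\widetilde V_i\subset V_i$ along the two filtrations. The paper is equally terse at the inductive step (it simply says ``for the same reason, we have $W_i\subset V_i$ for all $i$'').

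The obstacle you flag --- controlling $\widetilde V\cap V_i$ and the torsionfreeness of the successive quotients --- dissolves once you observe the following elementary fact: \emph{if $G$ is torsionfree semistable and $S\subset G$ is reflexive with $\mu(S)=\mu(G)$, then $G/S$ is automatically torsionfree.} Indeed, let $\overline S$ be the saturation of $S$ in $G$; semistability forces $\mu(\overline S)=\mu(S)$, so $\overline S/S$ is a torsion sheaf of degree zero, hence supported in codimension $\ge 2$. Then $S^*\cong\overline S{}^{*}$ (as $\mathcal Ext^1(\overline S/S,\mathcal O_X)=0$), so $\overline S\hookrightarrow\overline S{}^{**}=S^{**}=S$, which combined with $S\subset\overline S$ gives $\overline S=S$. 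With this in hand the induction is immediate: assuming $\widetilde V_{i-1}\subset V_{i-1}$, the composite $\widetilde V_i/\widetilde V_{i-1}\hookrightarrow F/\widetilde V_{i-1}\twoheadrightarrow F/V_{i-1}$ is either zero (whence $\widetilde V_i\subset V_{i-1}\subset V_i$) or injective (a nonzero map from a stable sheaf to a semistable sheaf of the same slope has trivial kernel), and in the latter case its image is a stable reflexive subsheaf of $F/V_{i-1}$ of slope $\mu(F)$, automatically with torsionfree quotient by the observation above, hence contained in $V_i/V_{i-1}$ by the maximality in Proposition~\ref{prop2}. So $\widetilde V_i\subset V_i$, and no separate technical lemma about intersections is needed.
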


\begin{proof}
The idea is to apply Proposition \ref{prop2} iteratively. Let
$$
V_1\, \subset\, F
$$
be the subsheaf given by Proposition \ref{prop2}. Assume that $V_1\, \not=\, 0$. Let
\begin{equation}\label{q1}
q_1\, :\, F\, \longrightarrow\, F/V_1
\end{equation}
be the quotient map. We note that $F/V_1$ is torsionfree and semistable; moreover,
we have $\mu(F/V_1)\,=\, \mu(F)$, if $F/V_1\, \not=\, 0$. Let
$$
V'_1\, \subset\, F/V_1
$$
be the subsheaf given by Proposition \ref{prop2}. Define
$$
V_2\, :=\, q^{-1}_1(V'_1)\, \subset\, F\, ,
$$
where $q_1$ is the projection in \eqref{q1}.
So $F/V_2\,=\, (F/V_1)/V'_1$ is torsionfree and semistable; moreover, 
we have $\mu(F/V_2)\,=\, \mu(F)$, if $F/V_2\, \not=\, 0$.

Proceeding step-by-step, this way we obtain a filtration of subsheaves
\begin{equation}\label{e5}
0\, =\, V_0\, \subset\, V_1\, \subset\, V_2\, \subset\, \cdots\, \subset\, V_{n-1}\, \subset\, V_n\,=\, V
\, \subset\, F
\end{equation}
such that $V_i/V_{i-1}$ is the subsheaf of $F/V_{i-1}$ given by Proposition \ref{prop2}, for
every $0\, \leq\, i\, \leq\, n$, and subsheaf of $F/V_n$ given by Proposition \ref{prop2} is the
zero subsheaf.

{}From Proposition \ref{prop2} it follows immediately that 
\begin{itemize}
\item $V$ is pseudo-stable (see Remark \ref{rem1}),

\item $\mu(V)\,=\, \mu(F)$, and

\item $F/V$ is torsionfree.
\end{itemize}
We need to show that $V$ is the unique maximal one among all subsheaves of $F$ satisfying these three
conditions.

Let $$W\, \subset\, F$$ be a coherent subsheaf such that
\begin{itemize}
\item $W$ is pseudo-stable,

\item $\mu(W)\,=\, \mu(F)$, and

\item $F/W$ is torsionfree.
\end{itemize}
Let
$$
0\, =\, W_0\, \subset\, W_1\, \subset\, W_2\, \subset\, \cdots\, \subset\, W_{m-1}\, \subset\, W_m\,=\, W
$$
be a filtration of $W$ such that 
$W_i/W_{i-1}$ is a stable reflexive sheaf with $\mu(W_i/W_{i-1})\,=\, \mu(W)$ for all
$1\, \leq\, i\, \leq \, m$. Then it follows immediately that $W_1\, \subset\, V_1$, where $V_1$ is the subsheaf
in \eqref{e5}. For the same reason, we have $W_i\, \subset\, V_i$ for all $i$ (see \eqref{e5}).
This proves that $V$ is the unique maximal one among all subsheaves of $F$ satisfying the three
conditions.
\end{proof}

\begin{remark}\label{rem4}
In Theorem \ref{thm1}, if $F$ does not contain any polystable reflexive subsheaf $F'$ with $\mu(F')\,=\, \mu(F)$,
then we set the maximal subsheaf $V$ in Theorem \ref{thm1} to be the zero subsheaf $0\, \subset\, F$.
\end{remark}

\begin{theorem}\label{thm2}
Let $F$ be a torsionfree semistable sheaf on $X$.
Assume that $F$ contains a polystable locally free subsheaf $F''$ with $\mu(F'')\,=\, \mu(F)$.
Then there is a unique coherent subsheaf $V\, \subset\, F$ satisfying the following four conditions:
\begin{enumerate}
\item $\mu(V)\,=\, \mu(F)$,

\item $V$ is a pseudo-stable bundle,

\item $F/V$ is torsionfree, and

\item $V$ is maximal among all subsheaves of $F$ satisfying the above three conditions.
\end{enumerate}
In particular, $V$ is locally free.
\end{theorem}

\begin{proof}
We apply Proposition \ref{prop3} iteratively, just as Proposition \ref{prop2} was applied iteratively
in the proof of Theorem \ref{thm1}. Apart from that, the proof is identical to the proof of Theorem \ref{thm1};
we omit the details. 
\end{proof}

\begin{remark}\label{rem5}
In Theorem \ref{thm2}, if $F$ does not contain any polystable locally free subsheaf $F''$ with $\mu(F'')
\,=\, \mu(F)$, then we set the maximal subsheaf $V$ in Theorem \ref{thm2} to be the zero subsheaf $0\, \subset\, F$.
\end{remark}

\section{Galois coverings and descent}

\subsection{Galois covering}\label{se5.1}

As before, $X$ is an irreducible normal projective variety over $k$, equipped
with an ample line bundle $L$. Let $Y$ be an irreducible projective variety and
\begin{equation}\label{e6}
\phi\, :\, Y\, \longrightarrow\, X
\end{equation}
an \'etale Galois covering. The Galois group $\text{Gal}(\phi)$ will be denoted by $\Gamma$.

Note that the line bundle $\phi^*L$ on $Y$ is ample. The degree of sheaves on $Y$ will be defined
using $\phi^*L$. For any torsionfree coherent sheaf $E$ on $X$,
\begin{equation}\label{f1}
\text{degree}(\phi^*E)\,=\, (\#\Gamma)\cdot \text{degree}(E)\, .
\end{equation}

Let $E$ be a semistable torsionfree sheaf on $X$. Then it can be shown that $\phi^*E$ is semistable. To prove
this, assume that $\phi^*E$ is not semistable. Let
$$
E'\, \subset\, \phi^*E
$$
be the maximal semistable subsheaf, namely the first term of the Harder--Narasimhan filtration of $\phi^*E$
(see \cite[p.~16, Theorem 1.3.4]{HL}). From the uniqueness of $E'$ it follows immediately that the natural
action of the Galois group $\Gamma$ on $\phi^*E$ preserves $E'$. Therefore, there is a unique subsheaf
$$
E_1\, \subset\, E
$$
such that $\phi^*E_1\,=\, E'$. Since $E'$ contradicts the semistability condition for $\phi^*E$, using
\eqref{f1} we conclude that $E_1$ contradicts the semistability condition for $E$. But $E$ is
semistable. This proves that $\phi^*E$ is semistable.

\subsection{An example}\label{se5.2}

Let
$$
V\, \subset\, E\ \ \ \text{ and }\ \ \ W\, \subset\, \phi^*E
$$
be the polystable subsheaves given by Proposition \ref{prop1}. In general, $W\, \not=\, \phi^*V$. Such an
example is given below.

Take $\Gamma$ such that the $\Gamma$--module $k[\Gamma]$ is not completely reducible; this
requires the characteristic of $k$ to be positive. Set
$E\,=\, \phi_*{\mathcal O}_Y$. We note that $\phi_*{\mathcal O}_Y$ is the vector bundle associated to
the principal $\Gamma$--bundle $Y\, \stackrel{\phi}{\longrightarrow}\, X$ for the $\Gamma$--module $k[\Gamma]$.
Then $\phi^*E$ is the trivial vector bundle
$Y\times k[\Gamma]\, \longrightarrow\, Y$ with fiber $k[\Gamma]$. Hence the polystable subsheaf
$$
W\, \subset\, \phi^*E
$$
given by Proposition \ref{prop1} is $\phi^*E$ itself. But $E$ is not polystable (though it is semistable) because
the $\Gamma$--module $k[\Gamma]$ is not completely reducible. Therefore, the
polystable subsheaf
$$
V\, \subset\, E
$$
given by Proposition \ref{prop1} is a proper subsheaf of $E$. In particular, we have $W\, \not=\, \phi^*V$.

\subsection{Descent of the reflexive subsheaf}

We continue with the set-up of Section \ref{se5.1}. 

\begin{proposition}\label{prop4}
Let $F$ be a semistable torsionfree sheaf on $X$. Let
$$
V\, \subset\, F\ \ \ \text{ and }\ \ \ W\, \subset\, \phi^*F
$$
be the pseudo-stable subsheaves given by Theorem \ref{thm1}. Then
$$
W\,=\, \phi^*V
$$
as subsheaves on $\phi^*F$.
\end{proposition}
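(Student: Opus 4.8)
The plan is to prove separately the two inclusions $\phi^*V\,\subseteq\, W$ and $W\,\subseteq\, \phi^*V$. Throughout I shall use the following standard facts about the finite \'etale covering $\phi$: the functor $\phi^*$ is exact and faithfully flat, hence reflects isomorphisms; it commutes with $\mathcal{H}om(-,\mathcal{O})$ and so with double duals, and consequently it both preserves and \emph{reflects} the properties ``torsionfree'' and ``reflexive''; one has $\mu(\phi^*E)\,=\,(\#\Gamma)\,\mu(E)$ by \eqref{f1}; and, by Section \ref{se5.1} together with its (immediate) converse, a torsionfree sheaf $E$ on $X$ is semistable if and only if $\phi^*E$ is. I shall also use the elementary fact that the quotient of a semistable sheaf by a saturated subsheaf of the same slope is again torsionfree and semistable.

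To get $\phi^*V\,\subseteq\, W$, the crucial point is a substitute for the (false) assertion ``the socle pulls back to the socle'', whose failure is precisely the content of Section \ref{se5.2}. I would first prove: \emph{if $S$ is a stable reflexive sheaf on $X$, then $\phi^*S$ is polystable.} To see this, apply Proposition \ref{prop1} to the reflexive semistable sheaf $\phi^*S$, let $P\,\subseteq\,\phi^*S$ be the maximal polystable subsheaf it provides, and note that by uniqueness the $\Gamma$-action on $\phi^*S$ preserves $P$; hence $P\,=\,\phi^*S_0$ for a unique $S_0\,\subseteq\, S$. Since $\mu(P)\,=\,\mu(\phi^*S)$ one gets $\mu(S_0)\,=\,\mu(S)$, and since $\phi^*(S/S_0)\,=\,\phi^*S/P$ is torsionfree the subsheaf $S_0$ is saturated; stability of $S$ then forces $S_0\,=\, S$, so $\phi^*S\,=\,P$ is polystable. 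Granting this, one takes a filtration of $V$ with stable reflexive quotients of slope $\mu(F)$, pulls it back, and concludes by Remark \ref{rem1} that $\phi^*V$ is pseudo-stable; as $\phi^*F/\phi^*V\,=\,\phi^*(F/V)$ is torsionfree, the maximality of $W$ in Theorem \ref{thm1} gives $\phi^*V\,\subseteq\, W$. (If $F$ contains no polystable reflexive subsheaf of slope $\mu(F)$, the same pullback property shows $\phi^*F$ contains none either, and both sides are $0$ by convention.)

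For the reverse inclusion I would argue by contradiction: suppose $\phi^*V\,\subsetneq\, W$. Then $W/\phi^*V$ is nonzero, it is torsionfree since it is contained in $\phi^*(F/V)$, and it is $\Gamma$-invariant --- $W$ because it is unique in Theorem \ref{thm1}, and $\phi^*V$ automatically --- so it descends: $W/\phi^*V\,=\,\phi^*U$ for a nonzero $U\,\subseteq\, F/V$ with $(F/V)/U$ torsionfree and $\mu(U)\,=\,\mu(F)$. The second key ingredient is: \emph{if $W_0$ is pseudo-stable and $K\,\subsetneq\, W_0$ is a saturated subsheaf with $\mu(K)\,=\,\mu(W_0)$, then $W_0/K$ contains a nonzero polystable reflexive subsheaf of slope $\mu(W_0)$.} Applying this with $W_0\,=\,W$ and $K\,=\,\phi^*V$ (which is saturated in $W$, being of the same slope with torsionfree quotient), one sees that $\phi^*U\,=\,W/\phi^*V$ contains a polystable reflexive subsheaf of slope $\mu(\phi^*F)$. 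Hence Proposition \ref{prop2}, applied to the torsionfree semistable sheaf $\phi^*U$, yields a \emph{nonzero} maximal polystable reflexive subsheaf $P_0\,\subseteq\,\phi^*U$; by uniqueness $P_0$ is $\Gamma$-invariant, so $P_0\,=\,\phi^*P_0'$ with $P_0'\,\subseteq\, U$ nonzero, $\mu(P_0')\,=\,\mu(F)$, and $P_0'$ reflexive and semistable since $\phi^*P_0'$ is. Finally, applying Proposition \ref{prop1} to $P_0'$ produces a nonzero polystable reflexive subsheaf of $P_0'$, hence of $F/V$, of slope $\mu(F)$ --- contradicting the fact that the iterative construction of $V$ in Theorem \ref{thm1} terminates exactly when $F/V$ contains no such subsheaf. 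Therefore $W\,=\,\phi^*V$.

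The step I expect to be the main obstacle is proving the second ingredient, since the pseudo-stable filtration of $W$ is in general \emph{not} $\Gamma$-invariant and so cannot be descended directly; this ingredient is exactly the mechanism that nevertheless extracts a $\Gamma$-invariant polystable reflexive piece from $W/\phi^*V$. I would prove it by induction on the length $n$ of a filtration $0\,=\,W'_0\,\subset\,W'_1\,\subset\,\cdots\,\subset\,W'_n\,=\,W_0$ with stable reflexive quotients $S_j\,=\,W'_j/W'_{j-1}$ of slope $\mu(W_0)$. For $n\,\leq\, 1$, stability of $S_1$ forces $K\,=\, 0$, so $W_0/K\,=\, W_0$ works. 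For $n\,\geq\, 2$, look at the image $W'_1/(W'_1\cap K)$ of $W'_1$ in $W_0/K$: were $W'_1\cap K$ a nonzero proper subsheaf of the stable sheaf $S_1\,=\,W'_1$, it would have slope $<\mu(W_0)$, so $W'_1/(W'_1\cap K)$ would have slope $>\mu(W_0)$ and would embed in $W_0/K$, contradicting that $W_0/K$ is semistable of slope $\mu(W_0)$; hence either $W'_1\cap K\,=\, 0$, and then $W'_1\,\hookrightarrow\, W_0/K$ is the required subsheaf, or $W'_1\,\subseteq\, K$, in which case $K/W'_1$ is a saturated subsheaf of slope $\mu(W_0)$ in the pseudo-stable sheaf $W_0/W'_1$ --- which carries the filtration $W'_\bullet/W'_1$ of length $n-1$ --- and $W_0/K\,=\,(W_0/W'_1)/(K/W'_1)$, so the induction hypothesis applies.
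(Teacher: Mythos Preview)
Your argument is correct. For the inclusion $\phi^*V\subseteq W$ you and the paper do essentially the same thing: both show that the pullback of a stable reflexive sheaf is polystable by descending the socle via its $\Gamma$-invariance, and then conclude that $\phi^*V$ is pseudo-stable.

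For the reverse inclusion $W\subseteq\phi^*V$, however, your route genuinely differs from the paper's. The paper proceeds constructively along the filtration of $W$: it descends the first polystable layer $B\subset\phi^*F$ (from Proposition~\ref{prop2}) to a subsheaf $A\subset F$ and shows directly that $A$ is pseudo-stable. The mechanism is that for any saturated $S\subset A$ with $\mu(S)=\mu(A)$, the pullback $\phi^*S$ is automatically a direct summand of the polystable sheaf $\phi^*A=B$, so $\phi^*(A/S)$ is reflexive and hence $A/S$ is reflexive; thus $A\subset V$, and one iterates layer by layer. Your approach is instead a contradiction argument hinging on your ``second ingredient'': a general lemma that a saturated same-slope quotient of a pseudo-stable sheaf always contains a stable reflexive subsheaf. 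This lemma is a nice stand-alone fact and your inductive proof of it is clean; combined with the $\Gamma$-invariance of $W$ it lets you produce, via Proposition~\ref{prop2} and Proposition~\ref{prop1}, a polystable reflexive subsheaf of $F/V$ and reach the desired contradiction. The paper's approach is shorter and exploits more directly the special feature that $\phi^*A$ is \emph{polystable} (so subsheaves split off); yours is more self-contained on the $X$-side and isolates a structural property of pseudo-stable sheaves that may be of independent use.
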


\begin{proof}
We trace the construction of the subsheaf in Proposition \ref{prop2}. First note that we have
$\phi^*(F^{**})\,=\,
(\phi^*F)^{**}$, because $\phi$ is \'etale. Therefore, the Galois group $\Gamma$ has a natural
action on $(\phi^*F)^{**}$. Since the polystable subsheaf $$B'\, \subset\, (\phi^*F)^{**}$$
given by Proposition \ref{prop1} is unique, the action of $\Gamma$ on $(\phi^*F)^{**}$ preserves
$B'$. Consequently, the polystable subsheaf
$$
B\, \subset\, \phi^*F
$$
given by Proposition \ref{prop2} is preserved by the action of $\Gamma$ on $\phi^*F$.
Hence there is a unique coherent subsheaf
$$
A\, \subset\, F
$$
such that $\phi^*A\,=\, B$ as subsheaves on $\phi^*F$.

The sheaf $A$ is semistable, because for a subsheaf $A_1\, \subset\, A$ contradicting the semistability
condition for $A$, the subsheaf $\phi^*A_1\, \subset\, \phi^*A\,=\, B$ contradicts the semistability
condition for $B$. We will prove that $A$ is pseudo-stable.

To prove that $A$ is pseudo-stable, first note that $A$ is reflexive, because $\phi$ is
\'etale and $B\,=\, \phi^*A$ is reflexive. Let
$$
0\, \longrightarrow\, S \, \longrightarrow\, A \, \longrightarrow\, Q \, \longrightarrow\, 0
$$
be a short exact sequence such that
\begin{itemize}
\item $\mu(S)\,=\, \mu(A)$, and

\item $Q$ is torsionfree.
\end{itemize}
Since $A$ is semistable, and $\mu(S)\,=\, \mu(A)$, it follows that both $S$ and $Q$ are semistable
and also we have $\mu(Q)\,=\, \mu(A)$.

To prove that $A$ is pseudo-stable, it suffices to show that $Q$ is reflexive.

As noted in Section \ref{se5.1}, the semistability of $S$ implies the
semistability of $\phi^*S$. We also have $\mu(\phi^*S)\,=\, \mu(\phi^*A)$, because
$\mu(S)\,=\, \mu(A)$. On the other hand $\phi^*A\,=\, B$ is polystable. These together imply
that $\phi^*S$ is a direct summand of $\phi^*A$. Fix a subsheaf
$$
S'\, \subset\, \phi^*A
$$
such that the natural homomorphism $S'\oplus \phi^*S\, \longrightarrow\, \phi^*A$ is an
isomorphism. Since $\phi^*A$ is reflexive, it follows that $S'$ is also reflexive. But
$S'\,=\, \phi^*Q$. Therefore, we conclude that $Q$ is reflexive. 

As noted before, this proves that $A$ is pseudo-stable.

Now following the iterative construction in Theorem \ref{thm1} it is straightforward to deduce
that
\begin{equation}\label{e8}
W\,\subset\, \phi^*V\, .
\end{equation}

To complete the proof we need to show that
\begin{equation}\label{e7}
\phi^*V \,\subset\, W\, .
\end{equation}

We note that to prove \eqref{e7} it suffices to show the following:

\textit{Let $E$ be a reflexive stable sheaf on $X$. Then $\phi^*E$ is pseudo-stable.}

To prove the above statement, let $E$ be a reflexive stable sheaf on $X$. So $\phi^*E$ is reflexive
and semistable (shown in Section \ref{se5.1}). Let
$$
E_1\, \subset\, \phi^*E
$$
be the polystable subsheaf given by Proposition \ref{prop1}. As observed earlier, from the uniqueness of $E_1$ it
follows immediately that the natural action of $\Gamma$ on $\phi^*E$ preserves $E_1$. Let $$E'\, \subset\, E$$
be the unique subsheaf such that $$E_1\,=\, \phi^*E'$$ as subsheaves of $\phi^*E$.

As noted before, $E'$ is semistable, because $\phi^*E$ is so; also, we have $\mu(E')\,=\, \mu(E)$, because
$\mu(\phi^*E')\,=\, \mu(\phi^*E)$. Furthermore, $E'$ is reflexive because $\phi^*E'$ is so.
Since $E$ is polystable, these together imply that $E'$ is a direct summand of $E$. Since $E$ is reflexive,
and $E/E'$ is a direct summand of $E$, we conclude that $E/E'$ is also reflexive. Consequently, the pullback
$$
\phi^*(E/E')\,=\, (\phi^*E)/E_1
$$
is also reflexive. Using this it is straightforward to deduce that $\phi^*E$ is pseudo-stable. Indeed,
in the above argument, substitute $E/E'$ in place of $E$, and iterate.

Hence the inclusion in \eqref{e7} holds. The proposition follows from \eqref{e8} and \eqref{e7}.
\end{proof}

\subsection{Descent of the locally free subsheaf}

\begin{proposition}\label{prop5}
Let $F$ be a semistable torsionfree sheaf on $X$. Let
$$
V\, \subset\, F\ \ \ \text{ and }\ \ \ W\, \subset\, \phi^*F
$$
be the pseudo-stable bundles given by Theorem \ref{thm2}. Then
$$
W\,=\, \phi^*V
$$
as subsheaves on $\phi^*F$.
\end{proposition}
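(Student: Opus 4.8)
The plan is to repeat the proof of Proposition \ref{prop4} almost verbatim, making the substitutions of ``reflexive'' by ``locally free'', of ``Proposition \ref{prop1}'' and ``Proposition \ref{prop2}'' by ``Proposition \ref{prop3}'', and of ``Theorem \ref{thm1}'' by ``Theorem \ref{thm2}''. For this dictionary to work one needs, beyond what is already used for Proposition \ref{prop4}, two elementary facts about the \'etale (hence finite and faithfully flat) map $\phi$: (i) a coherent sheaf $G$ on $X$ is locally free as soon as $\phi^*G$ is locally free (faithfully flat descent); and (ii) the ``locally free'' analogues of the closure properties of reflexivity used in Lemma \ref{lem1} --- a direct summand of a locally free sheaf is locally free, an extension of a locally free sheaf by a locally free sheaf is locally free, and the quotient of a locally free sheaf by a subbundle is locally free. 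The (vacuous) case $V\,=\,0$ will be subsumed in what follows.

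First I would trace the construction of the subsheaf of $\phi^*F$ given by Proposition \ref{prop3}. Since $\phi$ is \'etale, $(\phi^*F)^{**}\,=\,\phi^*(F^{**})$, so the Galois group $\Gamma$ acts on $(\phi^*F)^{**}$; by the uniqueness clause of Proposition \ref{prop1} the polystable reflexive subsheaf of $(\phi^*F)^{**}$ it produces is $\Gamma$-invariant, and then, by the uniqueness clause of Proposition \ref{prop3} (as $\Gamma$ permutes the subsheaves of $\phi^*F$ meeting the conditions there), the locally free polystable subsheaf $B^f\,\subset\,\phi^*F$ produced by Proposition \ref{prop3} is $\Gamma$-invariant. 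Hence $B^f\,=\,\phi^*A^f$ for a unique coherent subsheaf $A^f\,\subset\, F$. As in Proposition \ref{prop4}, $A^f$ is semistable with $\mu(A^f)\,=\,\mu(F)$ and $F/A^f$ torsionfree; it is moreover locally free, by fact (i). The one substantive point is that $A^f$ is a pseudo-stable \emph{bundle}. For this I would prove the locally free analogue of the reflexivity claim of Proposition \ref{prop4}: for every short exact sequence
$$
0\, \longrightarrow\, S\, \longrightarrow\, A^f\, \longrightarrow\, Q\, \longrightarrow\, 0
$$
with $\mu(S)\,=\,\mu(A^f)$ and $Q$ torsionfree, the sheaf $Q$ is locally free. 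Indeed, $\phi^*A^f\,=\,B^f$ is locally free and polystable, and $\phi^*S$ is semistable (pullback along $\phi$ preserves semistability, as recalled in Section \ref{se5.1}) of slope $\mu(\phi^*A^f)$, so $\phi^*S$ is a direct summand of $\phi^*A^f$; a complement is isomorphic to $\phi^*Q$, whence $\phi^*Q$ is a direct summand of the locally free sheaf $\phi^*A^f$ --- locally free by (ii) --- so $Q$ is locally free by (i). Feeding this claim into Proposition \ref{prop1} and iterating, the successive quotients in the resulting filtration of $A^f$ remain locally free, so $A^f$ acquires a filtration by subbundles with polystable locally free successive quotients; by Remark \ref{rem1}, $A^f$ is a pseudo-stable bundle. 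By the maximality in Theorem \ref{thm2}, $A^f\,\subset\, V$; running the iterative construction of Theorem \ref{thm2} on $\phi^*F$ (at each stage the relevant Proposition \ref{prop3} subsheaf is $\Gamma$-invariant and descends as above, and the descended subsheaves of $F$ are again pseudo-stable bundles with torsionfree quotient, hence lie in $V$) then yields $W\,\subset\,\phi^*V$.

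For the reverse inclusion $\phi^*V\,\subset\, W$, it suffices, as in Proposition \ref{prop4}, to prove that \emph{if $E$ is a stable locally free sheaf on $X$, then $\phi^*E$ is a pseudo-stable bundle}. Granting this, the filtration of $V$ by subbundles with stable locally free successive quotients pulls back flatly to a filtration of $\phi^*V$ by subbundles whose successive quotients are pullbacks of stable locally free sheaves, hence pseudo-stable bundles of slope $\mu(\phi^*F)$ (using \eqref{f1}); refining each step shows $\phi^*V$ is itself a pseudo-stable bundle. Since $\phi^*F/\phi^*V\,=\,\phi^*(F/V)$ is torsionfree and $\mu(\phi^*V)\,=\,\mu(\phi^*F)$, the maximality in Theorem \ref{thm2} gives $\phi^*V\,\subset\, W$. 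To prove the reduced statement: exactly as in the last part of the proof of Proposition \ref{prop4}, $\phi^*E$ is polystable --- the polystable reflexive subsheaf $E_1\,\subset\,\phi^*E$ from Proposition \ref{prop1} is $\Gamma$-invariant, hence of the form $\phi^*E'$ for a reflexive subsheaf $E'\,\subset\, E$ of the same slope with $E/E'$ torsionfree, and stability of $E$ forces $E'\,=\,E$, i.e.\ $E_1\,=\,\phi^*E$. Since $\phi^*E$ is also locally free, each stable reflexive summand of the polystable sheaf $\phi^*E$ is a direct summand of a locally free sheaf, hence locally free by (ii); so $\phi^*E$ is polystable locally free, in particular a pseudo-stable bundle. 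Combining the two inclusions gives $W\,=\,\phi^*V$.

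I do not expect a single decisive obstacle: the argument runs parallel to Proposition \ref{prop4}. The real care is needed in propagating local freeness through the descent --- invoking faithfully flat descent for $A^f$, and checking that ``locally free'' survives the extension, quotient-by-subbundle and direct-summand operations that in Proposition \ref{prop4} were handled by the reflexivity arguments around Lemma \ref{lem1}. Concretely, the step most deserving of a careful write-up is the assertion that the descended sheaf $A^f$ is a pseudo-stable \emph{bundle} (not merely a pseudo-stable sheaf), that is, the ``$Q$ is locally free'' claim and its iteration; this is the precise analogue of the hardest step in the proof of Proposition \ref{prop4}.
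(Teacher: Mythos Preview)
Your proposal is correct and follows the paper's own approach: the paper's proof simply says to adapt the proof of Proposition \ref{prop4} by tracing Proposition \ref{prop3} in place of Proposition \ref{prop2} and omits the details, and you have supplied precisely those details, including the faithfully-flat-descent step for local freeness that replaces the reflexivity arguments. Your treatment of the reverse inclusion is in fact slightly cleaner than the analogous step in Proposition \ref{prop4}, since you use stability of $E$ directly to force $E'=E$ (hence $\phi^*E$ is already polystable and locally free) rather than iterating.
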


\begin{proof}
The proof is very similar to the proof of Proposition \ref{prop4}.
The construction of the subsheaf in Proposition \ref{prop3} needs to traced
instead of the construction in Proposition \ref{prop2}. We omit the details.
\end{proof}

\section{Direct image of structure sheaf}

\subsection{The case of group quotients}

Let $Z$ be an irreducible normal projective variety over $k$ such that a finite (reduced) group $\Gamma$ is acting
faithfully on it. Then the quotient $X\, :=\, Z/\Gamma$ is also an irreducible normal projective variety,
and the quotient map
\begin{equation}\label{n1}
\varphi\, :\, Z \, \longrightarrow\, Z/\Gamma\,=:\, X
\end{equation}
is separable. Note that we are not assuming that the action of $\Gamma$ is free; so the map $\varphi$ need
not be \'etale. We fix an ample line bundle $L$ on $X$, and we equip $Z$ with $\varphi^*L$; so we have
the notion of degree of sheaves on both $X$ and $Z$.

Consider the direct image $\varphi_*{\mathcal O}_Z$; it is reflexive because $X$ and $Z$ are normal.
We have
\begin{equation}\label{n0}
(\varphi^*\varphi_*{\mathcal O}_Z)/{\rm Torsion}\, \subset\, {\mathcal O}_Z\otimes_k k[\Gamma]\, .
\end{equation}
Indeed, over the open subset $U\, \subset\, Z$ where $\varphi$ is flat, we have
$$\widehat{\varphi}^*\widehat{\varphi}_*{\mathcal O}_U\, \subset\, {\mathcal O}_U\otimes_k k[\Gamma]\, ,$$
where $\widehat{\varphi}\,=\, \varphi\vert_U$.
Since the codimension of $Z\setminus U\, \subset\, Z$ is at least two, and $Z$ is normal, this inclusion
map over $U$ extends to an inclusion map as in \eqref{n0}. From \eqref{n0} it
follows that $\mu_{\rm max}((\varphi^*\varphi_*{\mathcal O}_Z)/{\rm Torsion})\, \leq\, 0$, and hence
$$
\mu_{\rm max}(\varphi_*{\mathcal O}_Z)\, \leq\, 0\, .
$$
On the other hand, we have ${\mathcal O}_X\, \subset\, \varphi_*{\mathcal O}_Z$. Combining these
it follows that
\begin{equation}\label{n2}
\mu_{\rm max}(\varphi_*{\mathcal O}_Z)\, =\, 0\, .
\end{equation}
Let
\begin{equation}\label{n3}
F_1\, \subset\, \varphi_*{\mathcal O}_Z
\end{equation}
be the maximal semistable subsheaf of degree zero (see \eqref{n2}); in other words, $F_1$ is the first
nonzero term of the Harder--Narasimhan filtration of $\varphi_*{\mathcal O}_Z$. We note that $F_1$ is reflexive.

Let
\begin{equation}\label{n4}
W_1\, \subset\, F_1
\end{equation}
be the unique maximal locally free pseudo-stable bundle given by Theorem \ref{thm2} for
the sheaf $F_1$ in \eqref{n3}. We note that
Theorem \ref{thm2} is applicable because ${\mathcal O}_X\, \subset\, F_1$.

\begin{lemma}\label{lemn1}
The pullback $\varphi^*W_1$ of the sheaf $W_1$ in \eqref{n4} by the map in \eqref{n1} is a trivial bundle on $Z$.
\end{lemma}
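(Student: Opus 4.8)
The plan is to realize $\varphi^{*}W_{1}$ as a subsheaf of the trivial bundle ${\mathcal O}_{Z}\otimes_{k}k[\Gamma]$ having the same slope, and then to exploit the fact that this polystable bundle is a semisimple object of the category of semistable sheaves of slope $0$, together with the Krull--Schmidt theorem of Atiyah already used in the proof of Proposition \ref{prop3}.

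First I would pull back the chain of inclusions $W_{1}\subseteq F_{1}\subseteq\varphi_{*}{\mathcal O}_{Z}$ from \eqref{n3} and \eqref{n4}. Since $W_{1}$ is locally free, $\varphi^{*}W_{1}$ is locally free, hence torsionfree. The kernel of the natural map $\varphi^{*}W_{1}\to\varphi^{*}\varphi_{*}{\mathcal O}_{Z}$ is supported on the locus where $\varphi$ fails to be flat, which has codimension at least two, so it is a torsion subsheaf of the torsionfree sheaf $\varphi^{*}W_{1}$ and therefore vanishes; thus $\varphi^{*}W_{1}\hookrightarrow\varphi^{*}\varphi_{*}{\mathcal O}_{Z}$. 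Composing with the projection $\varphi^{*}\varphi_{*}{\mathcal O}_{Z}\twoheadrightarrow(\varphi^{*}\varphi_{*}{\mathcal O}_{Z})/{\rm Torsion}$ and with the inclusion \eqref{n0}, and discarding torsion once more for the same reason, I obtain an inclusion
\[
\varphi^{*}W_{1}\,\hookrightarrow\,{\mathcal O}_{Z}\otimes_{k}k[\Gamma]\,\cong\,{\mathcal O}_{Z}^{\oplus\#\Gamma}\, .
\]
Now comes the numerical bookkeeping. Since $\mu(W_{1})=\mu(F_{1})=\mu_{\rm max}(\varphi_{*}{\mathcal O}_{Z})=0$ by \eqref{n2} (recall Theorem \ref{thm2} is applied to $F_{1}$), we have ${\rm degree}(W_{1})=0$, and hence ${\rm degree}(\varphi^{*}W_{1})=(\#\Gamma)\cdot{\rm degree}(W_{1})=0$. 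The trivial bundle ${\mathcal O}_{Z}^{\oplus\#\Gamma}$ is polystable of slope $0$, so every subsheaf of it has $\mu_{\rm max}\leq 0$; combined with ${\rm degree}(\varphi^{*}W_{1})=0$ this shows that $\varphi^{*}W_{1}$ is semistable of slope $0$. Let $\overline{V}\subseteq{\mathcal O}_{Z}^{\oplus\#\Gamma}$ be the saturation of $\varphi^{*}W_{1}$. The quotient $\overline{V}/\varphi^{*}W_{1}$ is torsion, so it has nonnegative degree, whence ${\rm degree}(\overline{V})\geq 0$; on the other hand $\overline{V}$ is a subsheaf of the semistable slope-$0$ bundle ${\mathcal O}_{Z}^{\oplus\#\Gamma}$, so ${\rm degree}(\overline{V})\leq 0$. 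Therefore ${\rm degree}(\overline{V})=0$, and consequently the torsionfree sheaf ${\mathcal O}_{Z}^{\oplus\#\Gamma}/\overline{V}$ has degree $0$ and, being a quotient of a semistable slope-$0$ sheaf, is semistable of slope $0$; likewise $\overline{V}$ is semistable of slope $0$.

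Thus $0\to\overline{V}\to{\mathcal O}_{Z}^{\oplus\#\Gamma}\to{\mathcal O}_{Z}^{\oplus\#\Gamma}/\overline{V}\to 0$ is a short exact sequence in the abelian category of semistable sheaves of slope $0$ on $Z$, in which ${\mathcal O}_{Z}^{\oplus\#\Gamma}$ is a semisimple object (a direct sum of copies of the stable sheaf ${\mathcal O}_{Z}$). Hence $\overline{V}$ is a direct summand of ${\mathcal O}_{Z}^{\oplus\#\Gamma}$ as a coherent sheaf, and by the Krull--Schmidt theorem of Atiyah its indecomposable summands are among the $\#\Gamma$ copies of ${\mathcal O}_{Z}$, so $\overline{V}\,\cong\,{\mathcal O}_{Z}^{\oplus r}$ with $r={\rm rank}(W_{1})$. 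Finally, $\overline{V}/\varphi^{*}W_{1}$ is a torsion sheaf of degree ${\rm degree}(\overline{V})-{\rm degree}(\varphi^{*}W_{1})=0$, so it is supported in codimension at least two; since $\varphi^{*}W_{1}$ and $\overline{V}\,\cong\,{\mathcal O}_{Z}^{\oplus r}$ are both reflexive and coincide on the complement of that support, they are equal. Hence $\varphi^{*}W_{1}\,\cong\,{\mathcal O}_{Z}^{\oplus r}$ is a trivial bundle.

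The step I expect to be the main obstacle is the very first one: because the $\Gamma$-action on $Z$ is not assumed free, $\varphi$ need not be \'etale, so pullback along $\varphi$ is neither exact nor does it preserve torsionfreeness or saturatedness. One therefore has to push $\varphi^{*}W_{1}$ into the trivial bundle ${\mathcal O}_{Z}\otimes_{k}k[\Gamma]$ honestly, killing torsion at each stage (which is legitimate exactly because $\varphi^{*}W_{1}$ is locally free), and then at the end recover, via reflexivity, the saturated model $\overline{V}$ to which the semisimplicity and Krull--Schmidt arguments actually apply. Once the embedding into the trivial bundle is in place, the rest is formal.
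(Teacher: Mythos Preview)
Your proof is correct. Both you and the paper begin by realizing $\varphi^{*}W_{1}$ as a degree-zero locally free subsheaf of the trivial bundle ${\mathcal O}_{Z}\otimes_{k}k[\Gamma]$ via \eqref{n0}, but then diverge in how the triviality is extracted. The paper argues through determinants: it observes that $\bigwedge^{r}\varphi^{*}W_{1}$ is a degree-zero line bundle mapping nonzero into ${\mathcal O}_{Z}\otimes_{k}\bigwedge^{r}k[\Gamma]$, hence is a trivial sub\emph{bundle}; this forces $\varphi^{*}W_{1}$ itself to be a subbundle of the trivial bundle, and any degree-zero subbundle of a trivial bundle is trivial. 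You instead saturate to $\overline{V}$, use the semisimplicity of ${\mathcal O}_{Z}^{\oplus\#\Gamma}$ in the abelian category of semistable slope-zero sheaves (together with Atiyah's Krull--Schmidt) to identify $\overline{V}\cong{\mathcal O}_{Z}^{\oplus r}$, and then invoke reflexivity to collapse $\varphi^{*}W_{1}$ onto its saturation. The paper's determinant trick is shorter and more elementary; your route is more categorical, makes the role of polystability explicit, and handles the embedding step (where $\varphi$ need not be flat) more carefully than the paper, which takes that inclusion essentially for granted.
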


\begin{proof}
We know that $\varphi^*W_1$ is locally free of degree zero; let $r$ be its rank. Using the inclusion map
in \eqref{n0} we have
\begin{equation}\label{n5}
\bigwedge\nolimits^r \varphi^*W_1\, \subset\, \bigwedge\nolimits^r ({\mathcal O}_Z\otimes_k k[\Gamma])\,=\,
{\mathcal O}_Z\otimes_k \bigwedge\nolimits^r k[\Gamma]\, .
\end{equation}
Since $\bigwedge\nolimits^r \varphi^*W_1$ is a line bundle of degree zero, any nonzero homomorphism
$\bigwedge\nolimits^r \varphi^*W_1\, \longrightarrow\, {\mathcal O}_Z$ is an isomorphism. So
the subsheaf $\bigwedge\nolimits^r \varphi^*W_1$ in \eqref{n5} is a subbundle. From this it follows
that $\varphi^*W_1$ is a subbundle of ${\mathcal O}_Z\otimes_k k[\Gamma]$.

Any subbundle of degree zero of the trivial bundle is trivial. This proves that $\varphi^*W_1$ is trivial.
\end{proof}

\subsection{The general case}

Let $X$ and $Y$ be irreducible normal projective varieties over $k$, and let
\begin{equation}\label{m1}
\phi\, :\, Y \, \longrightarrow\, X
\end{equation}
be a separable finite surjective map. Let
\begin{equation}\label{m2}
\varphi\, :\, Z \, \longrightarrow\, X
\end{equation}
be the normal Galois closure of $\phi$. So there is a commutative diagram
$$
\begin{matrix}
Z & \stackrel{\varphi}{\longrightarrow}& X\\
\Big\downarrow && \Vert\\
Y & \stackrel{\phi}{\longrightarrow}& X
\end{matrix}
$$
We fix an ample line bundle $L$ on $X$, and we equip both $Y$ and $Z$ with its pullback.

We have
\begin{equation}\label{m0}
\phi_*{\mathcal O}_Y\, \subset\, \varphi_*{\mathcal O}_Z\, .
\end{equation}
Hence from \eqref{n2} it follows that
$$
\mu_{\rm max}(\phi_*{\mathcal O}_Y)\, =\, 0\, ;
$$
recall that ${\mathcal O}_X\, \subset\, \phi_*{\mathcal O}_Y$.

Let
\begin{equation}\label{m3}
F\, \subset\, \phi_*{\mathcal O}_Y
\end{equation}
be the maximal semistable subsheaf of degree zero (equivalently,
it is the first nonzero term of the Harder--Narasimhan filtration); let
\begin{equation}\label{m4}
W\, \subset\, F
\end{equation}
be the unique maximal locally free pseudo-stable bundle given by Theorem \ref{thm2} for
the sheaf $F$ in \eqref{m3}. As before, Theorem \ref{thm2} is applicable because
${\mathcal O}_X\, \subset\, F$.

The algebra structure on ${\mathcal O}_Y$ produces an algebra structure on $\phi_*{\mathcal O}_Y$. Let
\begin{equation}\label{n6}
{\mathbf m}\, :\, (\phi_*{\mathcal O}_Y)\otimes (\phi_*{\mathcal O}_Y)\, \longrightarrow\,
\phi_*{\mathcal O}_Y
\end{equation}
be the corresponding multiplication map.

\begin{proposition}\label{propn1}
The subsheaf $W\,\subset\,\phi_*{\mathcal O}_Y$ (see \eqref{m4}, \eqref{m3}) satisfies the condition
$$
{\mathbf m} (W\otimes W)\, \subset\, W\, ,
$$
where ${\mathbf m}$ is the map in \eqref{n6}.
\end{proposition}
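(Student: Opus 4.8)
The plan is to exploit the descent/Galois-closure machinery from the previous section together with the triviality statement of Lemma \ref{lemn1}. First I would pass to the normal Galois closure $\varphi\, :\, Z\, \longrightarrow\, X$ as in \eqref{m2}, with Galois group $\Gamma$, and compare the sheaf $F$ of \eqref{m3} with the sheaf $F_1\, \subset\, \varphi_*{\mathcal O}_Z$ of \eqref{n3}. Using the inclusion $\phi_*{\mathcal O}_Y\, \subset\, \varphi_*{\mathcal O}_Z$ of \eqref{m0}, both $F$ and $W$ sit inside $\varphi_*{\mathcal O}_Z$, and in fact inside $F_1$ and $W_1$ respectively, by the uniqueness/maximality in the Harder--Narasimhan filtration and in Theorem \ref{thm2}. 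Hence by Lemma \ref{lemn1} the pullback $\varphi^*W$ is a subbundle of the trivial bundle $\varphi^*\varphi_*{\mathcal O}_Z/{\rm Torsion}\, \subset\, {\mathcal O}_Z\otimes_k k[\Gamma]$ of degree zero, so $\varphi^*W$ is itself a trivial bundle; write $\varphi^*W\,=\, {\mathcal O}_Z\otimes_k U$ for a $k$-subspace $U\, \subset\, k[\Gamma]$ that carries the residual $\Gamma$-action (more precisely, $\varphi^*W$ is the trivial subbundle spanned by the sections of $W$, viewed via $W\, \subset\, \phi_*{\mathcal O}_Y\, \subset\, \varphi_*{\mathcal O}_Z$, pulled back to $Z$).

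The key point is then that the multiplication map ${\mathbf m}$ of \eqref{n6} is compatible, under pullback, with the ring multiplication on $\varphi_*{\mathcal O}_Z$, and ultimately with the ring multiplication on ${\mathcal O}_Z$ itself. So I would show that the sections of $W$, regarded as functions on $Z$ (via the composite $W\, \hookrightarrow\, \phi_*{\mathcal O}_Y\, \hookrightarrow\, \varphi_*{\mathcal O}_Z$ and then global sections of $\varphi^*W\,=\, {\mathcal O}_Z\otimes_k U$), form a $k$-subalgebra of the function ring; concretely, the product of two global sections of the trivial bundle $\varphi^*W$ is again a global section of a trivial subbundle of ${\mathcal O}_Z\otimes_k k[\Gamma]$, and one must check it lands back inside $\varphi^*W$. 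Here I would use that $W\otimes W$ is again locally free and semistable of slope zero — being a quotient/sub of tensor products of semistable slope-zero sheaves in characteristic-independent fashion requires a little care — so that ${\mathbf m}(W\otimes W)$ is a locally free (after saturation, reflexive, hence locally free since everything is inside a bundle) semistable subsheaf of $\phi_*{\mathcal O}_Y$ of slope $\le 0$; combined with the fact that it contains ${\mathcal O}_X$ (as $1\cdot s\,=\,s$ for $s$ a section), it has slope exactly zero and sits inside $F$.

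The cleanest route to closing the argument is the maximality clause of Theorem \ref{thm2}: let $W'\, \subset\, F$ be the saturation of the image ${\mathbf m}(W\otimes W)$ inside $F$ (equivalently, the smallest subsheaf containing it with torsionfree quotient). I would verify that $W'$ is a locally free pseudo-stable bundle of slope $\mu(F)\,=\,0$ with $F/W'$ torsionfree: local freeness follows since $\varphi^*W'$ is a degree-zero subbundle of the trivial bundle ${\mathcal O}_Z\otimes_k k[\Gamma]$ hence trivial (same argument as Lemma \ref{lemn1}, using that $\bigwedge^r$ of a degree-zero subsheaf of a trivial bundle is a subbundle), and triviality of $\varphi^*W'$ together with $\Gamma$-equivariance forces $W'$ itself to be a direct sum of sheaves descended from $\Gamma$-subrepresentations of $k[\Gamma]$, which are polystable of slope zero, so $W'$ is pseudo-stable by Remark \ref{rem1}. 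Since $W\, \subset\, W'\, \subset\, F$ and $W$ is the \emph{maximal} such subsheaf by Theorem \ref{thm2}, we get $W'\,=\, W$, and therefore ${\mathbf m}(W\otimes W)\, \subset\, W'\,=\, W$, as claimed.

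The main obstacle I anticipate is the positive-characteristic subtlety in asserting that $W\otimes W$, and hence the image ${\mathbf m}(W\otimes W)$, remains semistable of slope zero: tensor products of semistable sheaves need not be semistable in characteristic $p$. The way around it is not to use tensor-product semistability at all, but to argue entirely upstairs on $Z$, where $\varphi^*W$ is genuinely \emph{trivial} (not merely semistable), so the product of its sections is manifestly a bounded collection of functions spanning a $\Gamma$-stable, finite-dimensional subspace of $k[\Gamma]$ tensored into ${\mathcal O}_Z$ — i.e. a trivial subbundle — and then descend. Reducing everything to the triviality statement of Lemma \ref{lemn1}, rather than to slope inequalities for tensor products, is the essential trick, and making the descent of the "saturation of ${\mathbf m}(W\otimes W)$" precise (checking $\Gamma$-equivariance of the saturation, and that the descended sheaf has torsionfree quotient in $F$) is the step that needs the most care.
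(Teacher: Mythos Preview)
Your proposal is correct and follows essentially the same route as the paper: pass to the Galois closure $\varphi:Z\to X$, use $W\subset W_1$ and Lemma \ref{lemn1} to see that $\varphi^*W$ is a trivial bundle, deduce that $\varphi^*(W\otimes W)$ is trivial (so $W\otimes W$ is semistable of degree zero, sidestepping the characteristic-$p$ issue exactly as you suggest in your final paragraph), conclude that ${\mathbf m}(W\otimes W)\subset F$, observe that its pullback to $Z$ is again trivial, and finish with the maximality clause of Theorem \ref{thm2}. The paper is slightly more streamlined in that it does not explicitly pass to a saturation $W'$ or verify pseudo-stability via descent of $\Gamma$-representations; it simply notes that a torsionfree degree-zero quotient of a trivial bundle is trivial and invokes Theorem \ref{thm2} directly---but this is a cosmetic difference, not a substantive one.
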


\begin{proof}
{}From the inclusion in \eqref{m0} it follows that $F\, \subset\, F_1$ (defined in \eqref{m3}
and \eqref{n3}), and hence we have
$$
W\, \subset\, W_1
$$
(defined in \eqref{m4} and \eqref{n4}). This implies that
$$
\varphi^* W\, \subset\, \varphi^* W_1\, .
$$
Now, $\varphi^* W_1$ is trivial by Lemma \ref{lemn1}, and $\varphi^* W$ is a locally free
subsheaf of it of degree zero. Therefore, using the argument in the proof of Lemma \ref{lemn1}
we conclude that $\varphi^* W$ is a trivial subbundle of $\varphi^* W_1$.

Since $\varphi^* W$ is trivial, it follows that $(\varphi^* W)\otimes (\varphi^* W)\,=\,
\varphi^* (W\otimes W)$ is trivial. This implies that $W\otimes W$ is semistable. Also,
$\text{degree}(W\otimes W)\,=\, 0$, because $\text{degree}(W)\,=\, 0$. From these it follows
that
$$
{\mathbf m} (W\otimes W)\, \subset\, F
$$
(see \eqref{m4} and \eqref{n6} $F$ and $\mathbf m$).

Any torsionfree quotient, of degree zero, of a trivial vector bundle is also a
trivial vector bundle; this follows using the argument in Lemma \ref{lemn1}. From this, and the
characterization of $W$ in Theorem \ref{thm2}, we have ${\mathbf m} (W\otimes W)\, \subset\, W$.
\end{proof}

\begin{proposition}\label{propn2}
Take $\phi$ as in \eqref{m1}. Then the induced homomorphism of \'etale fundamental groups
$$
\phi_*\, :\, \pi^{\rm et}_{1}(Y) \, \longrightarrow\, \pi^{\rm et}_{1}(X)
$$
is surjective if and only if $W\,=\, {\mathcal O}_X$, where $W$ is defined in \eqref{m4}.
\end{proposition}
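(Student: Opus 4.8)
The plan is to convert the condition ``$W\,=\,{\mathcal O}_X$'' into the geometric statement that $\phi$ does not factor through any nontrivial connected finite \'etale covering of $X$, which by the Galois correspondence for $\pi^{\rm et}_{1}$ is equivalent to surjectivity of $\phi_*$. First I would record the standard reformulation: $\phi_*$ is surjective if and only if there is no connected finite \'etale covering $p\,:\,X'\,\longrightarrow\,X$ with $\deg p\,>\,1$ through which $\phi$ factors, because a proper closed subgroup of $\pi^{\rm et}_{1}(X)$ is contained in a proper open one, and the image of $\phi_*$ lies in the open subgroup ${\rm image}(p_*)$, of index $\deg p$, precisely when $\phi$ factors through $p$. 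I would also note at the outset that $W$ carries an ${\mathcal O}_X$-algebra structure (since ${\mathcal O}_X\,\subset\,W$ and ${\mathbf m}(W\otimes W)\,\subset\,W$ by Proposition \ref{propn1}), that $\widehat X\,:=\,{\rm Spec}_X(W)\,\longrightarrow\,X$ is finite and flat of degree ${\rm rank}(W)$ because $W$ is locally free, that $H^0(X,\,W)\,\subset\,H^0(X,\,\phi_*{\mathcal O}_Y)\,=\,H^0(Y,\,{\mathcal O}_Y)\,=\,k$ forces $\widehat X$ to be connected, and that the inclusion of ${\mathcal O}_X$-algebras $W\,\hookrightarrow\,\phi_*{\mathcal O}_Y$ yields a factorization $Y\,=\,{\rm Spec}_X(\phi_*{\mathcal O}_Y)\,\longrightarrow\,\widehat X\,\longrightarrow\,X$.

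For the implication ``$\phi_*$ surjective $\,\Longrightarrow\,W\,=\,{\mathcal O}_X$'' the crucial step is to prove that $\widehat X\,\longrightarrow\,X$ is \'etale; being finite and flat, it suffices to show $\Omega_{\widehat X/X}\,=\,0$. Here I would use that $\varphi^*W$ is trivial (Lemma \ref{lemn1} together with the inclusion $W\,\subset\,W_1$, as in the proof of Proposition \ref{propn1}) and that $\varphi^*W$, being a torsionfree ${\mathcal O}_Z$-algebra whose generic stalk $k(\widehat X)\otimes_{k(X)}k(Z)$ is reduced (reduced because $\phi$, hence $\widehat X\,\longrightarrow\,X$, is separable), is itself reduced. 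Consequently $\widehat X\times_X Z\,=\,{\rm Spec}_Z(\varphi^*W)$ is a reduced scheme, finite and flat of degree $r\,:=\,{\rm rank}(W)$ over the connected variety $Z$, with $\dim_k H^0\,=\,\dim_k H^0(Z,\,{\mathcal O}_Z^{\oplus r})\,=\,r$; a reduced scheme with these properties is a disjoint union of $r$ copies of $Z$, so $\widehat X\times_X Z\,\longrightarrow\,Z$ is \'etale and hence $\Omega_{(\widehat X\times_X Z)/Z}\,=\,0$. By the base change formula for differentials this sheaf equals the pullback of $\Omega_{\widehat X/X}$ along the surjection $\widehat X\times_X Z\,\longrightarrow\,\widehat X$, and a coherent sheaf whose pullback by a surjective morphism vanishes is zero; thus $\Omega_{\widehat X/X}\,=\,0$ and $\widehat X\,\longrightarrow\,X$ is \'etale. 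Since $\widehat X$ is connected and $\phi$ factors through it, surjectivity of $\phi_*$ forces $\deg(\widehat X/X)\,=\,1$, i.e.\ ${\rm rank}(W)\,=\,1$; a degree-zero line bundle containing ${\mathcal O}_X$ equals ${\mathcal O}_X$, so $W\,=\,{\mathcal O}_X$.

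For the converse I argue contrapositively. Assume $\phi$ factors as $Y\,\stackrel{\psi}{\longrightarrow}\,X'\,\stackrel{p}{\longrightarrow}\,X$ with $p$ connected finite \'etale of degree $n\,>\,1$. Then $p_*{\mathcal O}_{X'}\,\subset\,\phi_*{\mathcal O}_Y$ is locally free of rank $n$ and degree $0$; passing to the \'etale Galois closure $X'''\,\longrightarrow\,X' \,\longrightarrow\, X$ of $p$ (which is dominated by $Z$ at the function-field level, hence $Z\,\longrightarrow\,X'''$ by normality), one sees that $\varphi^*(p_*{\mathcal O}_{X'})$ is trivial, so $p_*{\mathcal O}_{X'}$ is semistable of slope $0$ and therefore $p_*{\mathcal O}_{X'}\,\subset\,F$, while $F/p_*{\mathcal O}_{X'}$ is torsionfree because ${\mathcal O}_{X'}$ is integrally closed in $k(X')$. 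Writing $G\,=\,{\rm Gal}(X'''/X)$, the bundle $p_*{\mathcal O}_{X'}$ is the one associated to the permutation $G$-module $k[G/{\rm Gal}(X'''/X')]$; a composition series of this module gives a filtration of $p_*{\mathcal O}_{X'}$ by subbundles whose successive quotients are the bundles $E_N$ associated to the simple constituents $N$. Each $E_N$ is stable: if $S\,\subsetneq\,E_N$ were a saturated subsheaf with $\mu(S)\,=\,0$, then its pullback to $X'''$ would be a saturated slope-$0$ subsheaf of the trivial --- hence polystable --- bundle, thus a direct summand of it, hence by Atiyah's uniqueness theorem (used already in Proposition \ref{prop3}) a trivial subbundle, and, being $G$-equivariant, it would come from a proper nonzero subrepresentation of $N$, contradicting simplicity. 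Therefore $p_*{\mathcal O}_{X'}$ is a pseudo-stable bundle of slope $0$ inside $F$ with torsionfree quotient, so by the maximality in Theorem \ref{thm2} we get $p_*{\mathcal O}_{X'}\,\subset\,W$; as $n\,\geq\,2$, this gives $W\,\neq\,{\mathcal O}_X$.

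I expect the main obstacle to be the \'etaleness of $\widehat X\,\longrightarrow\,X$ in the first implication. The difficulty is that $\varphi\,:\,Z\,\longrightarrow\,X$ need not be flat --- $X$ is only assumed normal --- so one cannot simply descend \'etaleness along $\varphi$; the point of the argument above is that the explicit computation $\varphi^*W\,\cong\,{\mathcal O}_Z^{\oplus r}$, together with reducedness of $\varphi^*W$ (which is exactly where separability of $\phi$ is used), forces $\widehat X\times_X Z$ to split completely over $Z$, and this, combined with the pullback-of-differentials argument, replaces flat descent along $\varphi$. A secondary technical point is the stability of the bundles $E_N$ attached to simple representations in positive characteristic, which the Krull--Schmidt property of coherent sheaves on $X$ handles cleanly.
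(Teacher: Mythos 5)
Your proof is correct, and its overall skeleton coincides with the paper's: one direction takes the relative spectrum $\widehat X={\rm Spec}_X(W)$ of the algebra $W$ (via Proposition \ref{propn1}), shows the resulting cover of $X$ is \'etale because $\varphi^*W$ trivializes over $Z$, and contradicts surjectivity when ${\rm rank}(W)\geq 2$; the other direction factors $\phi$ through a nontrivial connected \'etale cover $p\colon X'\to X$ and shows $p_*{\mathcal O}_{X'}\subset W$. Where you genuinely diverge is in how the two key steps are justified. For \'etaleness the paper simply asserts that triviality of the pullback of the covering to $Z$ forces $\phi'$ to be \'etale, whereas you cannot use flat descent (since $\varphi$ need not be flat) and instead argue via reducedness of $\varphi^*W$ as an algebra (this is where separability enters), the complete splitting $\widehat X\times_X Z\cong Z^{\sqcup r}$ forced by $h^0(\varphi^*W)=r$, and vanishing of $\Omega_{\widehat X/X}$ pulled back along a surjection; you also make explicit the connectedness of $\widehat X$ via $H^0(X,W)\subset H^0(Y,{\mathcal O}_Y)=k$, a point the paper leaves implicit but which is needed for the index argument. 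For the containment $p_*{\mathcal O}_{X'}\subset W$, the paper says only that triviality of the pullback to the Galois closure gives the inclusion (in effect leaning on the descent statement, Proposition \ref{prop5}, together with the maximality in Theorem \ref{thm2}); you instead prove directly that $p_*{\mathcal O}_{X'}$ is a pseudo-stable bundle by filtering the permutation module $k[G/H]$ by a composition series and proving stability of the bundles attached to simple $G$-modules, and you verify the torsionfree-quotient hypothesis via integral closure of ${\mathcal O}_{X'}$ — a condition the paper's proof does not mention but which is needed to invoke the maximality of $W$. So your route is a more self-contained (representation-theoretic plus explicit \'etaleness) version of the same argument: it buys independence from Proposition \ref{prop5} and fills several compressed steps, at the cost of extra machinery (Galois closures of $p$, composition series, the differentials argument) that the paper's appeal to its descent proposition avoids.
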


\begin{proof}
First assume that the homomorphism induced by $\phi$
\begin{equation}\label{n7}
\phi_*\, :\, \pi^{\rm et}_{1}(Y) \, \longrightarrow\, \pi^{\rm et}_{1}(X)
\end{equation}
is not surjective. Since $\phi$ is a finite surjective map, $$\phi_*(\pi^{\rm et}_{1}(Y))\,\subset\,
\pi^{\rm et}_{1}(X)$$ is a subgroup of finite index. Let
$$
\phi'\, :\, Y'\, \longrightarrow\, X
$$
be the \'etale covering corresponding to this finite index subgroup $\phi_*(\pi^{\rm et}_{1}(Y))$. So there
is a morphism
$$
\phi''\, :\, Y\, \longrightarrow\, Y'
$$
such that $\phi\,=\, \phi'\circ\phi''$.

We will now show that
\begin{equation}\label{esd}
\text{degree}(\phi'_*{\mathcal O}_{Y'})\,=\, 0\, .
\end{equation}
First, \eqref{esd} holds when $\dim X\,=\,1$ \cite[p.~306, Ch.~IV, Ex.~2.6(d)]{Ha}. If $\dim X\,\geq \,2$,
take any pair $(\mathbf{C},\, \delta)$, where $\mathbf{C}$ is an irreducible smooth projective curve over $k$ and
$\delta\, :\, \mathbf{C}\, \longrightarrow\, X$ is a morphism. Since \eqref{esd} holds for curves, it follows
that $\text{degree}(\delta^*\phi'_*{\mathcal O}_{Y'})\,=\, 0$. As this holds for all pairs
$(\mathbf{C},\, \delta)$ of the above type we conclude that \eqref{esd} holds.

It may be mentioned that norm $\text{Norm}(\phi'_*{\mathcal O}_{Y'})$ of
$\phi'_*{\mathcal O}_{Y'}$ is trivial, because $\phi'$ is \'etale. Since
$\text{Norm}(\phi'_*{\mathcal O}_{Y'})^{\otimes 2}\,=\, (\det \phi'_*{\mathcal O}_{Y'})^{\otimes 2}$,
the line bundle $(\det \phi'_*{\mathcal O}_{Y'})^{\otimes 2}$ is trivial. However, for our
purpose \eqref{esd} suffices.

Let
$$
\varphi_1\, :\, Z'\, \longrightarrow\, X
$$
be the normal Galois closure of $\phi'$. Since \eqref{esd} holds, from the proof of Proposition
\ref{propn1} it follows that $\varphi^*_1\phi'_*{\mathcal O}_{Y'}$ is trivial. Hence
$$
\phi'_*{\mathcal O}_{Y'}\, \subset\, W\, .
$$
But $\text{rank}(\phi'_*{\mathcal O}_{Y'}) \,=\, \text{degree}(\phi')\, >\, 1$, because $\phi_*$
in \eqref{n7} is not surjective. This implies that $\text{rank}(W)\, \geq\,
\text{rank}(\phi'_*{\mathcal O}_{Y'})\, >\, 1$, and hence $W\, \not=\, {\mathcal O}_X$.

To prove the converse assume that the homomorphism $\phi_*$ in \eqref{n7} is surjective.

Recall that ${\mathcal O}_X\, \subset\, W$.
Assume that $W\, \not=\, {\mathcal O}_X$. Hence we have $\text{rank}(W)\, \geq\,2$.

The spectrum of the subalgebra bundle $W\, \subset\, \varphi_*{\mathcal O}_Y$ in Proposition \ref{propn1}
produces a covering (it need not be \'etale)
\begin{equation}\label{l1}
\phi'\, :\, Y'\, \longrightarrow\, X\, .
\end{equation}
In the proof of Proposition \ref{propn1} we saw that $\varphi^*W$ is trivial, where $\varphi$ is the map
in \eqref{m2}. This implies that the pullback of the covering $\phi'$ in \eqref{l1} to $Z$ in \eqref{m2}
is trivial. Consequently, the map $\phi'$ is \'etale.

The inclusion map of $W$ in $\phi_*{\mathcal O}_Y$ (see \eqref{m3} and \eqref{m4}) produces a covering
$$\phi''\, :\, Y\, \longrightarrow\, Y'$$ such that $\phi\,=\, \phi'\circ\phi''$, where $\phi'$
is the map in \eqref{l1}.

But we have $\text{degree}(\phi')\,=\, \text{rank}(W)\, \geq\,2$. Hence 
$\phi_*$ in \eqref{n7} is not surjective. Since this contradicts the hypothesis, 
we conclude that $W\, =\, {\mathcal O}_X$. This completes the proof.
\end{proof}

%%%%%%%%%%%%%%%%%%%%%%%%%%%%%%%%%%%%%%%%%%%%%%%%%%%%%%%%%%%%%%%%%%%%%%%%%%%%%%%%%%%%%%%%%%%%%%%%%

\end{document}